\newtheorem{theorem}{{Theorem}}
\newtheorem{lemma}{{Lemma}}
\newtheorem{assumption}{{Assumption}}
\newtheorem{remark}{{Remark}}
\newcommand{\grad}{{\nabla}}   
\newcommand{\zero}{\mathbf{0}}  
\newcommand{\one}{\mathbf{1}}   
\newcommand{\real}{\mathbb{R}}  
\newcommand{\bdiag}{{\rm blkdiag}}  
\newcommand{\col}{\mathrm{col}}     
\newcommand{\Ex}{\mathop{\mathbb{E}{}}}
\newcommand{\minimize}{\mathop{\text{minimize}}}
\newcommand{\eg}{{\it e.g.}}
\newcommand{\ie}{{\it i.e.}}
\newcommand{\tran}{^{\textit{\footnotesize \texttt{T}}}} 
\renewcommand{\top}{\textit{\footnotesize \texttt{T}}} 
\newcommand{\define}{\triangleq} 
\newcommand{\qd}{\hfill{$\blacksquare$}}
\def\B{{\mathbf{B}}}
\def\F{{\mathbf{F}}}
\def\G{{\mathbf{G}}}
\def\I{{\mathbf{I}}}
\def\U{{\mathbf{U}}}
\def\V{{\mathbf{V}}}
\def\W{{\mathbf{W}}}
\def\a{{\mathbf{a}}}
\def\b{{\mathbf{b}}}
\def\f{{\mathbf{f}}}
\def\g{{\mathbf{g}}}
\def\v{{\mathbf{v}}}
\def\x{{\mathbf{x}}}
\def\z{{\mathbf{z}}}
\newcommand{\cE}{{\mathcal{E}}}
\newcommand{\cF}{{\mathcal{F}}}
\newcommand{\cN}{{\mathcal{N}}}
\newcommand{\cO}{{\mathcal{O}}}
\newcommand{\cU}{{\mathcal{U}}}
\def\evdots{\vbox{\baselineskip=2pt \lineskiplimit=0pt 
		\kern6pt \hbox{$.$}\hbox{$.$}\hbox{$.$}}}  
\begin{document}
	
	\title{\bfseries \huge An Enhanced  Gradient-Tracking Bound for Distributed Online Stochastic Convex Optimization}
	
	\author{Sulaiman A. Alghunaim and Kun Yuan 
		\thanks{ S. A. Alghunaim (\texttt{sulaiman.alghunaim@ku.edu.kw}) is with the Department of Electrical Engineering, Kuwait University,  Kuwait. K. Yuan (\texttt{kunyuan@pku.edu.cn}) is with the Center for Machine Learning Research, Peking University, China.}
	}
	\maketitle
	
	\begin{abstract}
		Gradient-tracking (GT) based decentralized methods have emerged as an effective and viable alternative method to decentralized (stochastic) gradient descent (DSGD) when solving distributed online stochastic optimization problems. Initial studies of GT methods implied that GT methods have worse network dependent rate than DSGD, contradicting experimental results. This dilemma has recently been resolved, and  tighter rates for GT methods have been established, which improves upon DSGD.

		In this work, we establish more enhanced rates for GT methods under the online stochastic convex settings. We present an alternative approach for analyzing GT methods for convex problems and over static graphs. When compared to previous analyses, this approach allows us to establish enhanced network dependent rates. 
	\end{abstract}
	
	\begin{IEEEkeywords}
		Distributed stochastic optimization, decentralized learning, gradient-tracking,  adapt-then-combine.
	\end{IEEEkeywords}

	\section{Introduction}
	We  consider the multi-agent consensus optimization problem, in which   $n$  agents work together  to solve the following stochastic optimization problem:
	\begin{align} \label{min_consenus}
		\minimize_{x \in \real^d} \quad f(x)= \frac{1}{n} \sum_{i=1}^n f_i(x) \quad f_i(x)\define \Ex [F_i(x;\xi_i)].
	\end{align}
	Here, $f_i: \real^d \rightarrow \real$ is the  private cost function held by agent $i$, which is defined as the expected value of some loss function $F_i(\cdot ,\xi_i)$ over local random variable $\xi_i$ (\eg, data points). An algorithm that solves \eqref{min_consenus} is said to be a {\em decentralized} method if its implementation requires the agents to communicate only with agents who are directly connected to them (\ie, neighbors) based on the given network topology/graph.
	
	One of the most popular decentralized methods to solve problem \eqref{min_consenus} is decentralized stochastic gradient descent (DSGD) \cite{lopes2008diffusion,ram2010distributed,
		cattivelli2010diffusion}. While DSGD is communication efficient and simple to implement, it converges slowly when the local functions/data are heterogeneous across nodes. Furthermore, because data heterogeneity can be amplified by large and sparse network topologies \cite{yuan2020influence}, DSGD performance is significantly degraded with these topologies. 
	
	In this work, we analyze the performance of the gradient-tracking method \cite{xu2015augmented,di2016next}, which is another well-known decentralized method that solves problem \eqref{min_consenus}. To describe the algorithm, we let
	$w_{ij} \geq 0$ denote the weight used by agent $i$ to scale information received from agent $j$ with $w_{ij}=0$ if $j \notin \cN_i$ where $\cN_i$ is the neighborhood of agent $i$.  The adapt-then-combine gradient-tracking (ATC-GT) method \cite{xu2015augmented} is described as follows:
	\begin{subequations} \label{GT_atc_alg}
		\begin{align} 
			x_i^{k+1}&=\sum_{j \in \cN_i} w_{ij} (x_j^{k} - \alpha g_j^{k}) \\
			g_i^{k+1} &= \sum_{j \in \cN_i} w_{ij} \big(g_j^{k} + \grad F_j(x_j^{k+1};\xi_j^{k+1})-\grad F_j(x_j^{k};\xi_j^{k}) \big)
		\end{align}
	\end{subequations}
	with initialization $g_i^0=\grad F_i(x_i^0;\xi^0_i)$ and arbitrary  $x_i^{0} \in \real^d$. Here, $\grad F_i(x_i^k;\xi^k_i)$ is the stochastic gradient and $\xi^k_i$ is the data sampled by agent $i$ at iteration $k$.
	
	Gradient-tracking can eliminate the  impact of heterogeneity between local functions \cite{xu2015augmented,di2016next,nedic2017achieving,qu2017harnessing}. In massive numerical experiments reported in \cite{pu2021distributed,xin2021improved,lu2019gnsd,yuan2021removing}, GT can significantly outperform DSGD in the online stochastic setting. Initial studies on the convergence rate of GT methods  are inadequate; they provide loose convergence rates that are more sensitive to network topology than vanilla DSGD. According to these findings, GT will converge slower than DSGD on large and sparse networks, which is counter-intuitive and contradicts numerical results published in the literature. Recent works \cite{koloskova2021improved,alghunaim2021unified}  establish the first convergence rates for GT that are faster than DSGD and more robust to sparse topologies under stochastic and non-convex settings. In this paper, we will provide additional enhancements for GT under convex and strongly convex settings.


	\subsection{Related works}
	
	Gradient-tracking (GT) methods, which utilize dynamic tracking mechanisms \cite{zhu2010discrete} to approximate the globally averaged gradient, have emerged as an alternative to decentralized gradient descent (DGD) \cite{nedic2009distributed,lopes2008diffusion,ram2010distributed,cattivelli2010diffusion,yuan2016convergence} with exact convergence for deterministic problems \cite{xu2015augmented,di2016next,nedic2017achieving,qu2017harnessing}. Since their inception, numerous works have investigated GT methods in a variety of contexts \cite{xi2018linear,pu2020push,daneshmand2018second,sun2019convergence,scutari2019distributed,saadatniaki2020decentralized,pu2021distributed,xin2021improved,tang2020distributed,xin2020fast,xin2021fast,li2020communication,sun2020improving}. However, all of these works provide convergence rates that can be worse than vanilla DSGD. In particular, these results indicate that GT is less robust to sparse topologies even if it can remove the influence of data heterogeneity.   
	The work \cite{alghunaim2021unified} established refined bounds for various methods including GT methods that improve upon DSGD under nonconvex settings. Improved network dependent bounds for GT methods in both convex and non-convex settings are also provided in \cite{koloskova2021improved}. In this work, we provide additional improvements over previous works in convex and strongly convex settings -- see Table \ref{table}.
	
	It should be noted that there are other methods that are different from GT methods but have been shown to have comparable or superior performance -- see \cite{alghunaim2019decentralized,alghunaim2021unified}  and references therein.  In contrast to these other methods, GT methods have been shown to converge in a variety of scenarios, such as directed graphs and time-varying graphs \cite{xi2018linear,pu2020push,scutari2019distributed}. We should also mention that there are modifications to GT approaches that can improve the rate at the price of knowing additional network information and/or more computation/memory \cite{sun2019convergence}. However, the focus of this study is on {\em basic vanilla} GT methods. 
	
	\begin{table*}[t]    \footnotesize
		\renewcommand{\arraystretch}{2}
		\begin{center}
			\caption{\footnotesize Convergence rate to reach $\epsilon$ accuracy. The strongly convex (SC)  and PL condition rates ignores iteration logarithmic factors. The quantity $\lambda=\rho(W-\tfrac{1}{n} \one \one\tran) \in (0,1)$ is the mixing rate of the network where $W$ is the network combination matrix.
				$a_0=	 \|\bar{x}^{0} -x^\star\|^2 $, $\varsigma_{\star}^2= \frac{1}{n} \sum_{i=1}^n \|\grad f_i(x^\star)\|^2$,  $\varsigma_{0}^2= \frac{1}{n} \sum_{i=1}^n \|\grad f_i(x^0)-\grad f(x^0)\|^2$,  $x^0$ is the initialization for all nodes, and $x^\star$ is an optimal solution of \eqref{min_consenus}.    } \label{table}
			\begin{adjustbox}{max width=2\columnwidth} 
				\begin{tabular}{llll}  \toprule
					\multicolumn{2}{c}{{\sc Reference}}	  & {\sc Iterations to $\epsilon$ accuracy} & {\sc Remark}
					\\ 
					\midrule 
					Convex &		  \cite{koloskova2021improved}   & 
					$\dfrac{1}{n \epsilon^2}+ \frac{\log(\frac{1}{1-\lambda})^{1/2}}{(1-\lambda)^{1/2}}   \dfrac{1 }{\epsilon^{3/ 2}}+\frac{\log(\frac{1}{1-\lambda}) (a_0+\varsigma_0^2)}{1-\lambda}  \dfrac{1}{\epsilon}$ 
					& Rate holds only when iteration number $K > \frac{\log(\frac{1}{1-\lambda})}{1-\lambda}$ \\
					\rowcolor[gray]{0.92}	Convex	&
					{\scriptsize	 \textbf{Our work}}    &
					$\dfrac{1}{n \epsilon^2}+ \frac1{(1-\lambda)^{1/2}}  \dfrac{1 }{ \epsilon^{3 / 2}}+\frac{(a_0+ \varsigma_\star^2)}{(1-\lambda) } \dfrac{1}{\epsilon}$ 
					&
					-- \\ \midrule
					
					SC  &
					\cite{pu2021distributed}  
					&
					$\dfrac{1}{n  \epsilon} +
					\frac{1}{(1-\lambda)^{3/ 2}} \dfrac{1 }{ \sqrt{\epsilon}}	+ \dfrac{C}{\sqrt{\epsilon}} $
					& $C$ depends on $1/(1-\lambda)$  \\
					PL$^*$   &  \cite{xin2021improved}  
					&
					$\dfrac{1}{n  \epsilon} +
					\frac{1}{(1-\lambda)^{3/ 2}} \dfrac{1 }{ \sqrt{\epsilon}}	+ \tilde{C} \log \frac{1}{\epsilon} $
					& $\tilde{C}$ depends on $1/(1-\lambda)$	 \\
					SC	&		  \cite{koloskova2021improved}   & 
					$\dfrac{1}{n \epsilon}+ \frac{\log(\frac{1}{1-\lambda})^{1/2}}{(1-\lambda)^{1/2}}   \dfrac{1 }{ \sqrt{\epsilon}}+\frac{\log(\frac{1}{1-\lambda})}{(1-\lambda) }  \log \left( \frac{  (a_0+\varsigma_0^2)}{(1-\lambda) \epsilon} \right)$
					&	 Rate holds only when iteration number $K > \frac{\log(\frac{1}{1-\lambda})}{1-\lambda}$	 \\
					PL$^*$  &  \cite{alghunaim2021unified}   &
					$\dfrac{1}{n  \epsilon}+
					\left(\frac{1}{(1-\lambda)^{1/2}} + \frac{1}{(1-\lambda) \sqrt{n}} \right) \dfrac{1}{\sqrt{\epsilon}}
					+ \frac{1}{1-\lambda} \log \left(\frac{(a_0+\varsigma_{\star}^2)}{\epsilon}\right)$
					&	Rate holds by tuning stepsize from \cite[Theorem 2]{alghunaim2021unified}	\\
					\rowcolor[gray]{.92}	SC	&		{\scriptsize \textbf{Our work} }  &
					$\dfrac{1}{n  \epsilon}+ \frac{1}{(1-\lambda)^{1/2}} \dfrac{1 }{\sqrt{\epsilon}}
					+ \dfrac{1}{1-\lambda} \log \left(\frac{(a_0+\varsigma_{\star}^2)}{\epsilon}\right)$
					&	
					-- \\  \bottomrule
				\end{tabular}
			\end{adjustbox}
			\label{table_transient_time_PL_bound}
		\end{center} 
		{\scriptsize $*$ The PL condition is weaker than SC and can hold for nonconvex functions; any SC function satisfies the PL condition.}
	\end{table*}

	\subsection{Contributions}
	\begin{itemize}
		\item We present an alternative approach for analyzing GT methods in convex and static graph settings, which may be useful for analyzing GT methods in other settings such as variance-reduced gradients.

		\item In {\em stochastic} and {\em convex} environments, our convergence rate improve and tighten existing {GT} bounds. We show, in particular, that under convex settings, GT methods  have better dependence on network topologies than in nonconvex settings \cite{alghunaim2021unified}. Also, our bounds removes the network dependent log factors in \cite{koloskova2021improved} -- See Table \ref{table}.

	\end{itemize}

	\section{ATC-GT and Main Assumption}
	In this section, we describe the GT algorithm \eqref{GT_atc_alg} in network notation and list all necessary assumptions. We begin by defining some network quantities.

	\subsection{GT in network notation}
	We define $x_i^k \in \real^d$ as the estimated value of $x \in \real^d$ at agent $i$ and iteration (time) $k$, and we introduce the augmented network quantities:
	\begin{align*}
		\x^k &\define \col\{x_1^k,\dots,x^k_n\} \in \real^{dn} \\
		\f(\x^k)&\define \sum_{i=1}^n f_i(x_i^k) \\
		\grad \f(\x^k)& \define \col\{\grad f_1(x_1^k),\dots,\grad f_n(x_n^k)\} \\
		\grad \F(\x^{k})&\define \col\{\grad F_1(x_1^k;\xi^k_1),\dots,\grad F_n(x_n^k;\xi^k_n)\} \\
		\g^k &\define \col\{g_1^k,\dots,g_n^k\} \in \real^{dn} .
	\end{align*}
	Here, $\col\{\cdot\}$ is an operation to stack all vectors on top of each other. In addition, we define
	\begin{align}
		W \define [w_{ij}] \in \real^{n \times n}, \quad 
		\W \define W \otimes I_d,
	\end{align}
	where $W$ is the network weight (or combination, mixing, gossip) matrix with elements $w_{ij}$,  and symbol $\otimes$ denotes the Kronecker product operation.  Using the above quantities, the ATC-GT method \eqref{GT_atc_alg} can be described as follows:
	\begin{subequations} \label{atc-gt}
		\begin{align}
			\x^{k+1}&=\W [\x^{k} - \alpha \g^{k}] \\
			\g^{k+1} &= \W [\g^{k} + \grad \mathbf{F}(\mathbf{x}^{k+1})-\grad \mathbf{F}(\mathbf{x}^{k}) ],
		\end{align}
	\end{subequations}
	with  initialization $\g^0=\grad \F(\x^0)$ and arbitrary  $\x^{0}$.

	\subsection{Assumptions}
	Here, we  list the assumptions used in our analyses. Our first assumption is on the network graph stated below.  
	
	\begin{assumption}[\sc \small Weight matrix] \label{assump:network} 
		The network graph is assumed to be static and, the weight matrix $W$ to be doubly stochastic and primitive. We further assume $W$ to be symmetric and positive semidefinite. \qd
	\end{assumption}
	
	\noindent It is important to note that assuming $W$ to be positive semidefinite is not restrictive; given  any doubly stochastic and symmetric $\tilde{W}$, we can easily construct a positive semidefinite weight matrix by $W = (I + \tilde{W})/2$. We also remark that, under Assumption \ref{assump:network}, the mixing rate of the network is:
	\begin{align} \label{graph_mixing_rate}
		\lambda \define  \big\|W-\tfrac{1}{n} \one \one\tran\big\| =\max_{i \in \{2,\ldots,n\}} |\lambda_i| <1.
	\end{align}

	The next assumption is on the objective function. 
	\begin{assumption}[\sc \small Objective function] \label{assump:smoothness} Each function $f_i: \real^d \rightarrow \real$ is $L$-smooth 
		\begin{align} \label{smooth_f_eq}
			\|\grad f_i(y) -\grad f_i(z)\| \leq L \|y -z\|, \quad \forall~y,z \in \real^d
		\end{align}
		and ($\mu$-strongly) convex
		for some $L \geq \mu  \geq 0$. As a result, the aggregate function $f(x)=\frac{1}{n} \sum_{i=1}^n f_i(x)$ is also $L$-smooth and ($\mu$-strongly) convex. (When $\mu=0$, then the objective functions are simply convex.)  \qd
	\end{assumption}

	We now state our final assumption related to the gradient noise.
	\begin{assumption}[\sc \small Gradient noise] \label{assump:noise} 
		For all $\{i\}_{i=1}^n$ and $k=0,1,\ldots$, we assume the following inequalities hold
		\begin{subequations} \label{noise_bound_eq}
			\begin{align}
				\Ex \big[\grad F_i(x_i^k;\xi_i^k)-\grad f_i(x_i^k) ~|~ \bm{\cF}^{k}\big] &=0, \label{noise_bound_eq_mean} \\
				\Ex \big[\|\grad F_i(x_i^k;\xi_i^k)-\grad f_i(x_i^k)\|^2 ~|~ \bm{\cF}^{k} \big] &\leq \sigma^2, \label{noise_bound_eq_variance}
			\end{align}
		\end{subequations}
		for some $\sigma^2 \geq 0$, where  $\bm{\cF}^k \define \{\x^0,\x^2,\ldots,\x^k\}$ is  the algorithm-generated filtration. We further assume that conditioned on $\bm{\cF}^{k}$, the random data  $\{\xi_i^t\}$ are independent of one another for any $\{i\}_{i=1}^n$ and $\{t \}_{t \leq k}$.   \qd
	\end{assumption}

	\section{Error Recursion} 
	To establish the convergence of \eqref{atc-gt}, we will first derive an error recursion that will be key to our enhanced bounds. Motivated by \cite{alghunaim2021unified}, the following result rewrites  algorithm \eqref{atc-gt} in an equivalent manner.
	\begin{lemma}[\sc \small Equivalent GT form]
		Let $\mathbf{x}^{0}$ take any arbitrary value and $\mathbf{z}^{0}=\zero$. Then for static graphs, the update for $\x^k$ in  algorithm \eqref{atc-gt} is equivalent to following updates for $k=1,2,\ldots$ \vspace{-1mm}
		\begin{subequations}   \label{alg_stochastic_UDA}
			\begin{align}
				\x^{k+1} &=   (2\W - \I)  \x^{k} -\alpha \W^2 \grad \F(\x^{k})  -  \mathbf{B} \z^{k}   \label{x_update}   \\
				\z^{k+1} &= \z^{k}+ \mathbf{B}  \x^{k} \label{z_update}
			\end{align}
		\end{subequations}
		with  initialization $\x^1=\W (\x^0 -\alpha \grad \F(\x^0))$ and $\z^{1} =  \mathbf{B}  \x^{0}$,	and  $\B=\I-\W$.
	\end{lemma}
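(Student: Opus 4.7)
The strategy is a two-step reformulation: first eliminate the tracking variable $\g^k$ from \eqref{atc-gt} to obtain a primal-only second-order recursion for $\x^k$, then introduce the auxiliary variable $\z^k$ (a running accumulator of disagreements) to lift that recursion into the first-order coupled system \eqref{alg_stochastic_UDA}, and finally verify the initial conditions.

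For the elimination, the $\x$-update in \eqref{atc-gt} gives $\alpha \W \g^k = \W \x^k - \x^{k+1}$, and therefore $\alpha \W^2 \g^k = \W^2 \x^k - \W \x^{k+1}$. Writing the $\x$-update at time $k{+}1$ and substituting the $\g$-update produces the primal-only second-order recursion
\begin{align*}
\x^{k+2} &= \W \x^{k+1} - \alpha \W^2 \g^k - \alpha \W^2 [\grad \F(\x^{k+1}) - \grad \F(\x^k)] \\
&= 2\W \x^{k+1} - \W^2 \x^k - \alpha \W^2 [\grad \F(\x^{k+1}) - \grad \F(\x^k)],\quad k \geq 0.
\end{align*}
To match this with \eqref{alg_stochastic_UDA}, I would difference \eqref{x_update} at indices $k$ and $k{+}1$, use \eqref{z_update} to replace $\z^{k+1}-\z^k = \B\x^k$, and eliminate $\B\z^k$ via \eqref{x_update} at time $k$. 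This gives $\x^{k+2} = 2\W \x^{k+1} - [(2\W - \I) + \B^2] \x^k - \alpha \W^2 [\grad \F(\x^{k+1}) - \grad \F(\x^k)]$. Because $\B = \I - \W$, the identity $(2\W - \I) + \B^2 = (2\W - \I) + (\I - 2\W + \W^2) = \W^2$ recovers exactly the primal-only recursion displayed above, so any sequence satisfying \eqref{alg_stochastic_UDA} also satisfies the same second-order recursion in $\x^k$.

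Finally, the initial-condition check: $\x^1 = \W(\x^0 - \alpha \grad \F(\x^0))$ is immediate from the $\x$-update of \eqref{atc-gt} at $k = 0$ together with $\g^0 = \grad \F(\x^0)$, and $\z^1 = \B \x^0$ follows from \eqref{z_update} at $k=0$ starting from $\z^0 = \zero$. The main obstacle is the base case: since both recursions are second-order in $\x^k$, closing the induction requires the pair $(\x^1, \x^2)$ generated by \eqref{alg_stochastic_UDA} to agree with the corresponding ATC-GT iterates. Agreement at $k = 1$ holds by construction; agreement at $k = 2$ hinges on the specific ATC-GT initialization $\g^0 = \grad \F(\x^0)$, which forces $\g^1 = \W \grad \F(\x^1)$ and makes the cross gradient terms cancel cleanly in the substitution linking \eqref{atc-gt} to the lifted form. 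Once the base case is settled, the common second-order recursion propagates the equivalence to all $k \geq 1$.
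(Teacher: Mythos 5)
Your skeleton is the same as the paper's proof: eliminate $\g^{k}$ to show that the ATC-GT iterates \eqref{atc-gt} obey the primal-only second-order recursion $\x^{k+2}=2\W\x^{k+1}-\W^{2}\x^{k}-\alpha\W^{2}\bigl[\grad\F(\x^{k+1})-\grad\F(\x^{k})\bigr]$, difference \eqref{x_update} and substitute \eqref{z_update} together with $(2\W-\I)+\B^{2}=\W^{2}$ to show that \eqref{alg_stochastic_UDA} obeys the same recursion, and then match initial data. Both eliminations are carried out correctly (the paper defers the ATC-GT side to references, which you make explicit), and you are right---indeed more careful than the paper, which verifies only $\x^{1}$---that because the common recursion is second order, the equivalence hinges on matching the pair $(\x^{1},\x^{2})$.

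The gap is that this $k=2$ match is asserted (``the cross gradient terms cancel cleanly'') rather than computed, and the computation does not bear the assertion out under the stated initialization. With $\z^{1}=\B\x^{0}$, \eqref{x_update} at $k=1$ gives $\x^{2}=(2\W-\I)\x^{1}-\alpha\W^{2}\grad\F(\x^{1})-\B^{2}\x^{0}$, while \eqref{atc-gt} with $\g^{0}=\grad\F(\x^{0})$ gives $\g^{1}=\W\grad\F(\x^{1})$ and hence $\x^{2}=\W\x^{1}-\alpha\W^{2}\grad\F(\x^{1})$. These coincide iff $\B\z^{1}=-\B\x^{1}$, i.e.\ iff $\B\bigl(\x^{0}-\alpha\W\grad\F(\x^{0})\bigr)=\zero$; in general the discrepancy is exactly $-\B\bigl(\x^{0}-\alpha\W\grad\F(\x^{0})\bigr)$, which is nonzero even for consensual $\x^{0}$ whenever $\W\grad\F(\x^{0})$ is not consensual (a three-node example with constant heterogeneous gradients already exhibits this). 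So the step you postponed is precisely where the work lies: closing the induction requires either an initialization satisfying $\B\z^{1}=-\B\x^{1}$ (any $\z^{1}\in-\x^{1}+\nullspace(\B)$ would do, and one should then say why such a choice is legitimate) or a weaker reading of the claimed equivalence; it cannot be discharged by appealing to $\g^{0}=\grad\F(\x^{0})$ alone. For completeness, note that the paper's own proof skirts the same point---it checks only $\x^{1}$ and applies the differenced relation ``for $k\geq1$'', which at $k=1$ tacitly uses \eqref{x_update} at $k=0$ with $\z^{0}=\zero$, a relation incompatible with the prescribed $\x^{1}=\W(\x^{0}-\alpha\grad\F(\x^{0}))$---but in your write-up this is the one substantive step, and it is missing.
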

	\begin{proof}
		Clearly with the above initialization, both $\x^1$ are identical for the updates \eqref{atc-gt} and \eqref{alg_stochastic_UDA}. Now, for $k \geq 1$, it holds from \eqref{x_update} that 
		\[
		\begin{aligned}
			\x^{k+1} - \x^{k} &=   (2\W - \I)  (\x^{k}- \x^{k-1})  -  \mathbf{B} (\z^{k}-\z^{k-1})  \\
			& \quad -\alpha \W^2 ( \grad \F(\x^{k})- \grad \F(\x^{k-1})).
		\end{aligned}
		\]
		Substituting $	\z^{k} - \z^{k-1}= \mathbf{B}  \x^{k-1}$ (\eqref{z_update}) and $\B=\I-\W$ into the above equation and rearranging the recursion gives 
		\[ 
		\begin{aligned}
			\x^{k+1}  	  &=  2\W  \x^{k}- \W^2 \x^{k-1} -\alpha \W^2 ( \grad \F(\x^{k})- \grad \F(\x^{k-1})) .
		\end{aligned}
		\]
		Following the same approach, we can also describe  the $\x^k$ update for the GT algorithm \eqref{atc-gt}  as above -- see \cite{alghunaim2019decentralized,alghunaim2021unified}. Hence, both methods are equivalent for static graph $\W$.
	\end{proof}

	Under Assumption \ref{assump:network}, the fixed point of recursion \eqref{alg_stochastic_UDA}, denoted by  $(\x^\star ,\z^\star)$,   satisfies:
	\begin{equation} \label{fixed_point}
		\begin{aligned}
			\zero &=  \alpha \W^2 \grad \mathbf{f}(\mathbf{x}^\star)   + \mathbf{B} \mathbf{z}^\star \  \\
			\zero &=  \mathbf{B}  \mathbf{x}^\star.
		\end{aligned}
	\end{equation}
	where $\x^\star=\one \otimes x^\star$ and $x^\star$ is the optimal solution of \eqref{min_consenus}. The existence of $\z^\star$ can be shown by using similar arguments as in \cite[Lemma 3.1]{shi2015extra} or \cite[Lemma 1]{alghunaim2019decentralized}. By introducing the notation
	\begin{align} \label{def_error}
		\tilde{\x}^{k} \define \x^k -\x^\star, \quad \tilde{\z} \define \z^k- \z^\star,
	\end{align}
	using \eqref{alg_stochastic_UDA} and the fact  $(2\W - \I)\x^\star=\x^\star$, we can get the error recursion:
	\begin{equation} \label{error_uda1}
		\begin{aligned} 
			\begin{bmatrix}
				\tilde{\x}^{k+1} \\
				\tilde{\z}^{k+1}
			\end{bmatrix}	&= \begin{bmatrix}
				2\W-\I & - \B \\
				\B & \I
			\end{bmatrix} \begin{bmatrix}
				\tilde{\x}^{k} \\
				\tilde{\z}^{k}
			\end{bmatrix}  \\
			& \qquad - \alpha \begin{bmatrix}
				\W^2  \big(\grad \mathbf{f}(\mathbf{x}^{k})-\grad \f(\x^\star)+ \v^k \big) \\
				0
			\end{bmatrix},
		\end{aligned}
	\end{equation}
	where $
	\v^k \define \grad \F(\x^k)-\grad \f (\x^k)$. 
	\begin{remark}[\sc \small Alternative analysis approach] \rm
		By describing GT \eqref{atc-gt} in the alternative form \eqref{alg_stochastic_UDA}, we are able to derive the error recursion from the {\em fixed point} \eqref{error_uda1}. This is similar to the way Exact-diffusion/D$^2$ is analyzed in \cite{yuan2020influence,yuan2021removing}. This alternative approach allows us to derive tighter bounds compared with existing GT works \cite{pu2021distributed,xin2021improved,koloskova2021improved,alghunaim2021unified}. \qd
	\end{remark}
	
	Convergence analysis of \eqref{error_uda1} still  remains difficult. We will exploit the properties of the matrix $\W$ to transform recursion \eqref{error_uda1} into a more suitable form for our analysis.  To that end,  the following quantities are introduced:
	\begin{subequations} \label{avg_def_quantites}
		\begin{align}
			\bar{x}^{k} &\define\frac{1}{n} (\one_n\tran \otimes I_d) \x^{k}=\frac{1}{n} \sum_{i=1}^n x_i^k  ,
			\\
			\bar{e}_x^k& \define \frac{1}{n}(\one_n\tran \otimes I_d) \tilde{\x}^{k}=  \bar{x}^{k} -x^\star ,
			\\
			\overline{\grad f}(\x^k)& \define \frac{1}{n} (\one_n\tran \otimes I_d) \grad \f(\x^k)=\frac{1}{n} \sum_{i=1}^n \grad f_i(x_i^k), \\
			\bar{v}^{k} &\define\frac{1}{n} (\one_n\tran \otimes I_d) \v^{k}.
		\end{align} 
	\end{subequations}
	Under Assumption \ref{assump:network}, the matrix $\W$ admits the following eigen-decomposition:
	\begin{align} \label{W_decompositon}
		\W= \U \mathbf{\Sigma} \U^{-1} = \underbrace{\begin{bmatrix}
				\one \otimes I_d & \hat{\U}
		\end{bmatrix}}_{\U} \underbrace{\begin{bmatrix}
				I_d & 0 \\
				0 & \mathbf{\Lambda}
		\end{bmatrix}}_{ \mathbf{\Sigma}} \underbrace{\begin{bmatrix}
				\frac{1}{n} \one\tran \otimes I_d \\ \hat{\U}\tran
		\end{bmatrix}}_{\U^{-1}} 
	\end{align}
	where  $\mathbf{\Lambda}$ is a diagonal matrix with eigenvalues strictly less than one and $\hat{\U}$ is an ${dn \times d(n-1)}$ matrix that satisfies
	\begin{subequations} \label{UUtran}
		\begin{align}
			\hat{\U}\tran \hat{\U}&=\I, \quad (\one\tran \otimes I_d) \hat{\U}=0 \\
			\hat{\U}\hat{\U}\tran&=\I-\tfrac{1}{n} \one \one\tran \otimes I_d.
		\end{align}
	\end{subequations}
	
	\begin{lemma}[\sc \small Decomposed error recursion] \label{lemma:error_decomposed}
		Under Assumption \ref{assump:network}, there exists matrices $\hat{\V}$ and $\mathbf{\Gamma}$  to transform the error recursion \eqref{error_uda1} into the following form:
		\begin{subequations} \label{error_diag_transformed}
			\begin{align}
				\bar{e}_x^{k+1} &=\bar{e}_x^{k} - \alpha  \overline{\grad f}(\x^k)+\alpha \bar{v}^k, \label{error_average_diag}  \\
				\hat{\x}^{k+1}&=\mathbf{\Gamma} \hat{\x}^{k} - \alpha  \hat{\V}_l^{-1}  
				\mathbf{\Lambda}^2 \hat{\U}\tran   \big(\grad \mathbf{f}(\mathbf{x}^{k})-\grad \f(\x^\star)+\v^k \big) , \label{error_hat_diag}  
			\end{align}
		\end{subequations}
		where 
		\begin{align} \label{x_hat_def}
			\hat{\x}^{k} \define \hat{\V}^{-1} \begin{bmatrix}
				\hat{\U}\tran \tilde{\x}^{k} \\
				\hat{\U}\tran \tilde{\z}^{k}
			\end{bmatrix},
		\end{align}
		and $ \hat{\V}_l^{-1}$ denotes the left block of $ \hat{\V}^{-1}=[ \hat{\V}_l^{-1} ~  \hat{\V}_r^{-1}]$.  Moreover, the following bounds hold:  
		\begin{align} \label{gt_diff_W_bound}
			\|\hat{\V}\|^2 &\leq 3, \quad
			\|\hat{\V}^{-1}\|^2 \leq 9, \quad   \|\mathbf{\Gamma}\| \leq  \tfrac{1+\lambda}{2},
		\end{align}
		where $\lambda=\max_{i \in \{2,\dots,n\}} \lambda_i$.
	\end{lemma}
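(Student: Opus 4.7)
The plan is to diagonalize the error recursion \eqref{error_uda1} by exploiting the spectral structure of $\W$ in \eqref{W_decompositon}, separating the ``average'' (consensus) component from the ``consensus-orthogonal'' component and then bringing the latter's coefficient matrix to a convenient near-Jordan form via an additional similarity transform $\hat{\V}$.

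For the average component, I would left-multiply both sides of \eqref{error_uda1} by $\tfrac{1}{n}(\one\tran\otimes I_d)$. Since $W$ is doubly stochastic, $(\one\tran\otimes I_d)\W=\one\tran\otimes I_d$, and therefore $(\one\tran\otimes I_d)(2\W-\I)=\one\tran\otimes I_d$, $(\one\tran\otimes I_d)\W^2=\one\tran\otimes I_d$, and $(\one\tran\otimes I_d)\B=0$. Combining these identities with $\tfrac{1}{n}(\one\tran\otimes I_d)\grad\f(\x^\star)=\grad f(x^\star)=0$ (since $x^\star$ solves \eqref{min_consenus}) delivers \eqref{error_average_diag} directly.

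For the consensus-orthogonal part, I would left-multiply \eqref{error_uda1} by $\mathrm{blkdiag}(\hat{\U}\tran,\hat{\U}\tran)$ and use $\hat{\U}\tran\W=\mathbf{\Lambda}\hat{\U}\tran$ (from \eqref{W_decompositon}--\eqref{UUtran}) to obtain a reduced recursion with coefficient matrix $\tilde{M}=\begin{bmatrix}2\mathbf{\Lambda}-I & -(I-\mathbf{\Lambda}) \\ I-\mathbf{\Lambda} & I\end{bmatrix}$. A coordinate permutation renders $\tilde{M}$ block-diagonal in $2\times 2$ blocks $M_i=\begin{bmatrix}2\lambda_i-1 & -(1-\lambda_i) \\ 1-\lambda_i & 1\end{bmatrix}$ indexed by the non-trivial eigenvalues $\lambda_i$ of $W$; the characteristic polynomial of $M_i$ simplifies to $(\gamma-\lambda_i)^2$, so each $M_i$ has the double eigenvalue $\lambda_i$ but is not diagonalizable. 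Taking $\hat{\V}$ to be the block-diagonal assembly (after un-permutation) of matching $2\times 2$ similarity blocks $V_i$ and defining $\mathbf{\Gamma}=\hat{\V}^{-1}\tilde{M}\hat{\V}$, left-multiplication of the reduced recursion by $\hat{\V}^{-1}$ produces \eqref{error_hat_diag}; the noise factor $\hat{\V}_l^{-1}\mathbf{\Lambda}^2\hat{\U}\tran$ arises because only the upper block of the inhomogeneous term is nonzero.

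The main obstacle is to choose the $V_i$ so that $\|\mathbf{\Gamma}\|\le(1+\lambda)/2$ while simultaneously keeping $\|\hat{\V}\|^2\le 3$ and $\|\hat{\V}^{-1}\|^2\le 9$. Since $M_i$ admits a nontrivial Jordan structure, the natural strategy is first to change basis to eigenvector/generalized-eigenvector coordinates and then to rescale the generalized eigenvector by the factor $1/2$, shrinking the super-diagonal Jordan entry from $1-\lambda_i$ to $(1-\lambda_i)/2$; a concrete candidate is $V_i=\begin{bmatrix}1 & 1/2 \\ -1 & 0\end{bmatrix}$, for which one checks $V_i^{-1}M_iV_i=\begin{bmatrix}\lambda_i & (\lambda_i-1)/2 \\ 0 & \lambda_i\end{bmatrix}$. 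Explicit diagonalization of $\Gamma_i\tran\Gamma_i$ then reduces $\|\Gamma_i\|\le(1+\lambda_i)/2$ to an elementary quadratic inequality in $1-\lambda_i\in(0,1]$, which crucially uses $\lambda_i\ge 0$ (guaranteed by the positive-semidefiniteness of $W$ in Assumption~\ref{assump:network}); analogous $2\times 2$ eigenvalue computations on $V_i\tran V_i$ and $V_i^{-1}(V_i^{-1})\tran$ yield $\|V_i\|^2\le 3$ and $\|V_i^{-1}\|^2\le 9$. These per-block bounds transfer to $\hat{\V}$ and $\mathbf{\Gamma}$ via orthogonality of the permutation and block-diagonality, and taking the maximum over $i$ gives $\|\mathbf{\Gamma}\|\le(1+\lambda)/2$ as claimed.
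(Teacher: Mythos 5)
Your proposal is correct and follows essentially the same route as the paper: project the fixed-point recursion \eqref{error_uda1} onto the consensus direction (using double stochasticity and $\grad f(x^\star)=0$) and onto $\hat{\U}$, arrive at the same reduced coefficient matrix $\G=\begin{bmatrix} 2\mathbf{\Lambda}-\I & -(\I-\mathbf{\Lambda})\\ \I-\mathbf{\Lambda} & \I \end{bmatrix}$, and then apply a $2\times 2$ block similarity to reach a scaled near-Jordan form. The only difference is that the paper delegates this last step to \cite[Appendix B]{alghunaim2021unified}, whereas you construct it explicitly, and your construction checks out: $V_i^{-1}M_iV_i$ is upper triangular with double eigenvalue $\lambda_i$ and off-diagonal entry $(\lambda_i-1)/2$, $\|V_i\|^2=\tfrac{9+\sqrt{65}}{8}\leq 3$, $\|V_i^{-1}\|^2=\tfrac{9+\sqrt{65}}{2}\leq 9$, and $\|\Gamma_i\|\leq \lambda_i+\tfrac{1-\lambda_i}{2}=\tfrac{1+\lambda_i}{2}$ (using $\lambda_i\geq 0$ from positive semidefiniteness of $W$), with the block bounds transferring to $\hat{\V}$, $\hat{\V}^{-1}$, $\mathbf{\Gamma}$ by permutation invariance of the spectral norm exactly as you say.
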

	\begin{proof}
		See Appendix \ref{app:lemma_decomposition}
	\end{proof}
	The preceding result will serve as the starting point for deriving the bounds that will lead us to our conclusions. Specifically, we can derive the following bounds from the above result.
	\begin{lemma} [\sc \small Coupled error inequality] \label{lemma_coupled}
		Suppose Assumptions \ref{assump:network}--\ref{assump:smoothness} hold. Then, if $\alpha < \tfrac{1}{4 L}$, we have
		\begin{align} \label{ineq_average}
			\Ex	\|\bar{e}_x^{k+1}\|^2  
			& \leq (1- \mu \alpha) \Ex \|	\bar{e}_x^{k}\|^2  - \alpha  \big( \Ex f(\bar{x}^{k})-f(x^\star)\big) \nonumber \\
			& \quad  + \frac{3 \alpha c_1^2 L }{2n} \Ex \| 	\hat{\x}^{k} \|^2 + \frac{\alpha^2 \sigma^2}{n}, 
		\end{align}
		and
		\begin{align} \label{ineq_cons}
			\Ex  \|\hat{\x}^{k+1}\|^2  & \leq   \gamma \Ex \| \hat{\x}^{k}\|^2 + \frac{\alpha^2  c_2^2 \lambda^4}{ (1-\gamma)} \Ex  \|\grad \mathbf{f}(\mathbf{x}^{k})-\grad \f(\x^\star)  \|^2  \nonumber \\
			& \quad 
			+  \alpha^2  c_2^2  \lambda^4 n \sigma^2,
		\end{align}
		where $\gamma \define \|\mathbf{\Gamma}\|$, $c_1 \define \|\hat{\V}\|$, and $c_2=\|\hat{\V}^{-1}\|$.
	\end{lemma}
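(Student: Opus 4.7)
The plan is to prove (16) and (17) separately, since they arise from the two decoupled recursions (10a) and (10b) produced in Lemma~\ref{lemma:error_decomposed}.

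For the average-iterate bound (16), I would start from (10a) and condition on $\bm{\cF}^k$. The noise $\bar v^k$ has mean zero by Assumption~\ref{assump:noise} and, being the average of $n$ conditionally independent coordinate noises each of variance at most $\sigma^2$, satisfies $\Ex[\|\bar v^k\|^2\mid\bm{\cF}^k]\leq \sigma^2/n$, which produces the $\alpha^2\sigma^2/n$ term. For the signal part I would expand $\|\bar e_x^k - \alpha\overline{\grad f}(\x^k)\|^2$ and bound the inner product $\langle \bar e_x^k, \overline{\grad f}(\x^k)\rangle$ agent-by-agent via the split $\bar x^k - x^\star = (\bar x^k - x_i^k) + (x_i^k - x^\star)$: the descent lemma applied to the first piece and $\mu$-strong convexity to the second, combined with Jensen to convert $\tfrac{1}{n}\sum_i\|x_i^k - x^\star\|^2$ into $\|\bar e_x^k\|^2$, should yield
\[
\langle \bar e_x^k, \overline{\grad f}(\x^k)\rangle \geq \big(f(\bar x^k) - f(x^\star)\big) + \tfrac{\mu}{2}\|\bar e_x^k\|^2 - \tfrac{L}{2n}\|\x^k - \one\otimes\bar x^k\|^2.
\]
Next I would bound $\|\overline{\grad f}(\x^k)\|^2$ by splitting around $\grad f(\bar x^k)$, using $L$-smoothness together with $\|\grad f(\bar x^k)\|^2 \leq 2L(f(\bar x^k)-f(x^\star))$. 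Imposing $\alpha \leq 1/(4L)$ makes $2(1-2L\alpha)\geq 1$ and $1+2L\alpha\leq 3/2$, which collapses the functional-value contributions to $-\alpha(f(\bar x^k)-f(x^\star))$ and pins the consensus coefficient at $3\alpha L/(2n)$. Finally, combining (13) with $\hat{\U}\tran\hat{\U}=\I$ from (12) gives $\|\x^k - \one\otimes\bar x^k\|^2 = \|\hat{\U}\tran\tilde{\x}^k\|^2 \leq \|\hat{\V}\|^2\|\hat{\x}^k\|^2 = c_1^2\|\hat{\x}^k\|^2$; taking total expectation then produces (16).

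For the consensus bound (17), I would start from (10b) and again condition on $\bm{\cF}^k$. Because $\v^k$ is zero-mean, the conditional second moment splits cleanly into a signal-squared plus a noise-variance piece. The noise piece is bounded by a chain of operator-norm estimates: $\|\hat{\V}_l^{-1}\|\leq\|\hat{\V}^{-1}\|=c_2$, $\|\mathbf{\Lambda}^2\|\leq\lambda^2$, $\|\hat{\U}\tran\v^k\|\leq\|\v^k\|$, and $\Ex[\|\v^k\|^2\mid\bm{\cF}^k]\leq n\sigma^2$, yielding the $\alpha^2 c_2^2\lambda^4 n\sigma^2$ term. For the signal I would apply Young's inequality $\|a+b\|^2\leq(1+\eta)\|a\|^2+(1+1/\eta)\|b\|^2$ with the deliberate choice $\eta=(1-\gamma)/\gamma$, so that $(1+\eta)\gamma^2=\gamma$ and $1+1/\eta=1/(1-\gamma)$; the same operator-norm chain then produces the $\alpha^2 c_2^2\lambda^4/(1-\gamma)$ coefficient multiplying $\|\grad\f(\x^k)-\grad\f(\x^\star)\|^2$, giving (17) after total expectation.

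The main obstacle is the inner-product bound underpinning (16): a naive Young split of $\langle \bar e_x^k, \overline{\grad f}(\x^k) - \grad f(\bar x^k)\rangle$ would introduce a $1/\mu$ factor that degenerates in the pure convex regime ($\mu = 0$) considered in the paper. The mixed descent-lemma/strong-convexity decomposition sketched above is what avoids this and keeps the consensus coefficient at order $\alpha L/n$ rather than $\alpha^2 L^2/n$; this is precisely the improvement that drives the tighter network dependence reported in Table~\ref{table}. The rest of the argument is essentially operator-norm bookkeeping using (13) and the filtration structure of Assumption~\ref{assump:noise}.
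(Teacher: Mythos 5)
Your proposal is correct and follows essentially the same route as the paper: the same descent-lemma/strong-convexity split of the cross term $\langle \bar{e}_x^{k}, \overline{\grad f}(\x^k)\rangle$ with Jensen's inequality, the same bound $\|\grad f(\bar{x}^{k})\|^2 \leq 2L(f(\bar{x}^{k})-f(x^\star))$ for the quadratic term under $\alpha \leq \tfrac{1}{4L}$, and the same conversion $\|\x^{k}-\one\otimes\bar{x}^{k}\|^2 \leq c_1^2\|\hat{\x}^{k}\|^2$ for \eqref{ineq_average}. For \eqref{ineq_cons}, your Young's inequality with $\eta=(1-\gamma)/\gamma$ is exactly the paper's choice $\theta=\gamma$, and the operator-norm bookkeeping matches the paper's argument.
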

	\begin{proof}
		See Appendix \ref{app:lemma_coupled}.
	\end{proof}

	\section{Convergence Results}
	In this section, we present our main convergence results in Theorems \ref{thm_cvx_convergence} and \ref{thm_strong_cvx_convergence}. We then discuss our results and highlight the differences with existing bounds.

	\begin{theorem} [\sc \small Convex case] \label{thm_cvx_convergence}
		Suppose that Assumptions \ref{assump:network}-\ref{assump:smoothness} are satisfied. Then, there exists a constant stepsize $\alpha$ such that
		\begin{align} \label{cvx_theorem}
			&	\frac{1}{K}   \sum\limits_{k=0}^{K-1} 	\left(	\Ex [f(\bar{x}^{k})-f^\star] + \frac{ L }{ n}	  \Ex  	\|\x^{k} - \one \otimes \bar{x}^{k}\|^2 \right)  \nonumber \\
			&\leq \frac{  \sigma  \|	\bar{e}_x^{0}\|}{\sqrt{n K}}
			+ \left(\frac{ L \lambda^4 \sigma^2}{1-\lambda}\right)^{1 / 3}\left(\frac{ \|	\bar{e}_x^{0}\|^2}{K}\right)^{\tfrac{2}{3}}
			\nonumber \\
			& \quad + \left( \frac{ L \lambda^2}{1-\lambda} \|	\bar{e}_x^{0}\|^2 + \frac{  \varsigma_{\star}^2      }{ L (1-\lambda)  }   \right) \frac{C}{K},
		\end{align}
		where $	\bar{e}_x^0 \define  \bar{x}^{0} -x^\star $, $\varsigma_{\star}^2 \define \frac{1}{n} \sum_{i=1}^n \|\grad f_i(x^\star)\|^2$, and $C$ is an absolute constant.
	\end{theorem}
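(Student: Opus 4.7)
The plan is to fold the two coupled inequalities of Lemma 2 into a single Lyapunov recursion, telescope, and finish with a standard three-term stepsize tuning. I take $V^k \define \Ex\|\bar{e}_x^k\|^2 + A\,\Ex\|\hat{\x}^k\|^2$ for a weight $A$ to be pinned down later, set $\mu=0$ in \eqref{ineq_average}, and add $A$ times \eqref{ineq_cons}. After absorbing the $\|\hat{\x}^k\|^2$ and gradient-squared terms back into the potential, I expect an inequality of the form
\begin{align*}
V^{k+1} &\leq V^k - c_f\alpha\big(\Ex f(\bar{x}^k)-f^\star\big) - c_c\alpha \frac{L}{n}\Ex\|\x^k-\one\otimes\bar{x}^k\|^2 \\
& \quad + \frac{\alpha^2\sigma^2}{n} + A\alpha^2 c_2^2\lambda^4 n\sigma^2,
\end{align*}
with positive constants $c_f,c_c$, valid for any $\alpha \leq \bar\alpha = O((1-\lambda)/L)$.

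To make that combination go through I need to control the gradient-difference term on the right of \eqref{ineq_cons}. Splitting $\grad\f(\x^k)-\grad\f(\x^\star) = [\grad\f(\x^k)-\grad\f(\one\otimes\bar{x}^k)] + [\grad\f(\one\otimes\bar{x}^k)-\grad\f(\x^\star)]$ and applying $L$-smoothness together with the co-coercivity inequality $\|\grad f_i(y)-\grad f_i(x^\star)\|^2 \leq 2L(f_i(y)-f_i(x^\star) - \langle\grad f_i(x^\star),y-x^\star\rangle)$ yields, after noting that $\sum_i \grad f_i(x^\star) = n\grad f(x^\star)=0$,
\[
\|\grad\f(\x^k)-\grad\f(\x^\star)\|^2 \leq 2L^2\|\x^k-\one\otimes\bar{x}^k\|^2 + 4Ln\big(f(\bar x^k)-f^\star\big).
\]
The consensus part satisfies $\|\x^k-\one\otimes\bar{x}^k\|^2=\|\hat{\U}\tran\tilde{\x}^k\|^2 \leq c_1^2\|\hat{\x}^k\|^2\leq 3\|\hat{\x}^k\|^2$ by \eqref{gt_diff_W_bound} and \eqref{UUtran}. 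Choosing $A \asymp \alpha L/[n(1-\lambda)]$ and $\alpha \lesssim (1-\lambda)/L$ makes the coefficient of $\|\hat{\x}^k\|^2$ in $V^{k+1}$ strictly below $A$ (since $1-\gamma\geq(1-\lambda)/2$) while the coefficient of $(\Ex f(\bar x^k)-f^\star)$ stays of order $\alpha$.

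Telescoping the Lyapunov recursion over $k=0,\ldots,K-1$, dividing by $c_f\alpha K$, and moving the consensus term to the left gives
\[
\frac{1}{K}\sum_{k=0}^{K-1}\Big(\Ex[f(\bar x^k)-f^\star] + \frac{L}{n}\Ex\|\x^k-\one\otimes\bar x^k\|^2\Big) \lesssim \frac{V^0}{\alpha K} + \alpha\frac{\sigma^2}{n} + \frac{\alpha^2 L\lambda^4\sigma^2}{1-\lambda}.
\]
The initialization $V^0$ splits into $\|\bar e_x^0\|^2$ and $A\|\hat{\x}^0\|^2$; taking $\x^0=\one\otimes x^0$ and $\z^0=\zero$ kills $\hat{\U}\tran\tilde{\x}^0$, so $\|\hat{\x}^0\|^2\leq c_2^2\|\hat{\U}\tran\z^\star\|^2$, and the fixed-point relation $\B\z^\star=-\alpha\W^2\grad\f(\x^\star)$ together with the invertibility of $\B$ on the consensus-orthogonal subspace (smallest nonzero eigenvalue $\geq 1-\lambda$) yields $\|\hat{\U}\tran\z^\star\|^2\lesssim \alpha^2\lambda^4 n\varsigma_\star^2/(1-\lambda)^2$. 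Once multiplied by $A$ and divided by $\alpha K$, this is exactly the $\varsigma_\star^2/[L(1-\lambda)K]$ piece of \eqref{cvx_theorem}. A standard tuning lemma of the form $\min_{\alpha\leq\bar\alpha}\{r/(\alpha K)+\alpha d_1+\alpha^2 d_2\}\lesssim r/(\bar\alpha K)+\sqrt{rd_1/K}+(r^2 d_2/K^2)^{1/3}$ applied with $\bar\alpha\asymp(1-\lambda)/L$, $d_1=\sigma^2/n$, $d_2=L\lambda^4\sigma^2/(1-\lambda)$ then produces the three summands advertised.

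The main technical obstacle is the simultaneous balancing in the second step: $A$ must be large enough that the $\tfrac{3\alpha c_1^2 L}{2n}\|\hat{\x}^k\|^2$ in \eqref{ineq_average} is dominated by $A(1-\gamma)\|\hat{\x}^k\|^2$; small enough that the extra $4Ln(f(\bar x^k)-f^\star)$ from the smoothness-plus-convexity substitution does not swallow the $-\alpha(f(\bar x^k)-f^\star)$ term in \eqref{ineq_average}; and the residual coefficient on $\|\x^k-\one\otimes\bar x^k\|^2$ must stay strictly negative so it can be carried to the left side after telescoping. Matching these three constraints simultaneously is what forces the threshold $\bar\alpha=O((1-\lambda)/L)$ rather than the looser $O((1-\lambda)^{3/2}/L)$ appearing in earlier GT analyses, and this single improvement is what yields the tighter network-dependent rate of Table~\ref{table}.
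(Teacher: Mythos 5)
Your proposal is correct and follows essentially the same route as the paper's proof: the same coupled inequalities from Lemma \ref{lemma_coupled}, the same bound $\|\grad \mathbf{f}(\mathbf{x}^{k})-\grad \f(\x^\star)\|^2 \leq 4nL(f(\bar{x}^k)-f^\star)+2L^2\|\x^k-\one\otimes\bar{x}^k\|^2$, the same treatment of $\|\hat{\x}^0\|^2$ via the fixed point $\hat{\U}\tran\z^\star=\alpha(\I-\bLambda)^{-1}\bLambda^2\hat{\U}\tran\grad\f(\x^\star)$, the same stepsize threshold $\alpha\lesssim(1-\lambda)/(c_1c_2L\lambda^2)$, and the same three-case stepsize tuning. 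The only difference is bookkeeping: you combine the two inequalities per iteration into a weighted Lyapunov function $V^k=\Ex\|\bar{e}_x^k\|^2+A\,\Ex\|\hat{\x}^k\|^2$ and telescope, whereas the paper iterates the $\hat{\x}$-recursion, averages over $k$, and then adds a fixed multiple of that averaged bound to the averaged descent inequality — these are equivalent organizations of the same argument.
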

	\begin{proof}
		See Appendix \ref{app:thm_cvx_proof}.
	\end{proof}

	\begin{theorem} [\sc \small Strongly-convex case] \label{thm_strong_cvx_convergence}
		Suppose that Assumptions \ref{assump:network}-\ref{assump:smoothness} are satisfied.  Then, there exists a constant stepsize $\alpha$ such that
		\begin{align}  \label{scvx_theorem}
			&		\Ex  \|\bar{e}_x^{K}\|^2  +  \tfrac{1}{n}\|\x^{K} - \one \otimes \bar{x}^{K}\|^2  
			\leq \tilde{\mathcal{O}}\left(\frac{\sigma^2}{n  K}+\frac{\sigma^2 }{(1-\lambda) K^{2}}
			\right)
			\nonumber	\\
			& \quad + \tilde{\mathcal{O}}\left(  \frac{\sigma^2 }{(1-\lambda)^2 n  K^{3}}
			+ (a_0+\varsigma_{\star}^2)  \exp \left[-(1-\lambda) K \right] \right),
		\end{align}
		where $a_0 \define	 \|\bar{x}^{0} -x^\star\|^2 $, $\varsigma_{\star}^2 \define \frac{1}{n} \sum_{i=1}^n \|\grad f_i(x^\star)\|^2$, and the notation $\tilde{\mathcal{O}}(\cdot)$ ignores logarithmic factors.
	\end{theorem}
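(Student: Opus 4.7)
The plan is to combine the two inequalities of Lemma~\ref{lemma_coupled} into a single Lyapunov recursion that admits linear contraction up to a noise floor, and then tune the stepsize to balance bias against variance. I would introduce
$$V^k \define \Ex\|\bar{e}_x^{k}\|^2 + \beta\, \Ex\|\hat{\x}^k\|^2,$$
with a weight $\beta$ of order $\alpha L/(n(1-\lambda))$ chosen so that the cross term $\tfrac{3\alpha c_1^2 L}{2n}\Ex\|\hat{\x}^k\|^2$ in \eqref{ineq_average} is absorbed by $\beta$ times the contraction gap $1-\gamma \geq (1-\lambda)/2$ on the consensus side \eqref{ineq_cons}. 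Strong convexity of $f$ is then used to replace $\Ex f(\bar{x}^k)-f^\star$ by $\tfrac{\mu}{2}\Ex\|\bar{e}_x^k\|^2$, turning the averaged recursion into a genuine contraction by a factor like $(1-\tfrac{3}{2}\mu\alpha)$.

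To close the recursion I would bound the smoothness term appearing in \eqref{ineq_cons} via
$$\|\grad \f(\x^k)-\grad \f(\x^\star)\|^2 \leq L^2 c_1^2 \|\hat{\x}^k\|^2 + L^2 n\,\|\bar{e}_x^k\|^2,$$
using $L$-smoothness of each $f_i$ together with the orthogonal decomposition $\|\tilde{\x}^k\|^2 = \|\x^k-\one\otimes \bar{x}^k\|^2 + n\|\bar{e}_x^k\|^2$ and the identity $\|\x^k-\one\otimes \bar{x}^k\|^2 = \|\hat{\U}\tran\tilde{\x}^k\|^2 \leq c_1^2\|\hat{\x}^k\|^2$ that follows from \eqref{x_hat_def} together with \eqref{gt_diff_W_bound}. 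Imposing $\alpha \leq c_0 (1-\lambda)/L$ for a small absolute constant $c_0$ then makes the induced $\alpha^2 L^2$-perturbation on both components negligible against $\mu\alpha/2$ on the averaged side and $(1-\gamma)/2$ on the consensus side, yielding a clean one-step inequality
$$V^{k+1} \leq (1-q)\,V^{k} + D\alpha^2, \qquad q \define \min\big\{\tfrac{\mu\alpha}{2},\; \tfrac{1-\lambda}{4}\big\},$$
where $D$ aggregates the two stochastic noise sources of order $\sigma^2/n$ and $\beta\lambda^4 n\sigma^2$.

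Unrolling gives $V^K \leq (1-q)^K V^0 + D\alpha^2/q$, and I would then apply the standard two-regime stepsize choice $\alpha = \min\{\alpha_{\max},\,\log(\max\{2,\,\mu^2 V^0 K/D\})/(\mu K)\}$ with $\alpha_{\max} = \Theta((1-\lambda)/L)$. The large-$K$ branch yields the leading $\tilde{\cO}(\sigma^2/(nK))$ term plus the higher-order $\tilde{\cO}(\sigma^2/((1-\lambda)K^2))$ and $\tilde{\cO}(\sigma^2/((1-\lambda)^2 n K^3))$ residuals once one tracks how $D$ depends on $\alpha$. The transient branch $\alpha = \alpha_{\max}$ gives the $\exp[-(1-\lambda)K]$ decay, after $\varsigma_\star^2$ is absorbed into $V^0$ using the fixed-point identity $\B\z^\star = -\alpha \W^2\grad\f(\x^\star)$ from \eqref{fixed_point}, which bounds $\|\hat{\x}^0\|^2 \lesssim \|\tilde{\x}^0\|^2 + \alpha^2 n\,\varsigma_\star^2/(1-\lambda)^2$.

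The main obstacle will be the final stepsize tuning: getting the transient exponent to read as $(1-\lambda)K$ rather than the slower $\mu\alpha_{\max}K$ forces one to let the consensus component's intrinsic $(1-\gamma)^K$ decay drive the transient, and to execute the tuning (in the spirit of Stich's lemma, but for two distinct noise levels) without inflating the $\varsigma_\star^2/(1-\lambda)$ scaling that the fixed-point formulation \eqref{alg_stochastic_UDA} was introduced to preserve. This is precisely where the alternative analysis pays off: because the gradient enters the consensus error through $\W^2\hat{\U}\tran$ and through $\B\z^\star$ rather than directly into a tracking variable, the noise coefficient carries a $\lambda^4$ factor and the bias carries a single $1/(1-\lambda)$, which is what removes the $\log(1/(1-\lambda))$ factor present in \cite{koloskova2021improved}.
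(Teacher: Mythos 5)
Your overall strategy matches the paper's proof in its essentials: substitute the smoothness bound $\|\grad \f(\x^k)-\grad\f(\x^\star)\|^2\lesssim L^2c_1^2\|\hat{\x}^k\|^2+nL^2\|\bar{e}_x^k\|^2$ into \eqref{ineq_cons}, couple the result with \eqref{ineq_average} to get a contraction-plus-noise recursion, control $\|\hat{\x}^0\|^2$ through the fixed-point relation \eqref{fixed_point} so that only $\alpha^2\varsigma_\star^2/(1-\lambda)^2$-type terms enter the transient, and finish with the two-regime (Stich-type) stepsize tuning. The only structural difference is that the paper keeps a genuine $2\times 2$ vector recursion with the \emph{fixed} weights $(1,\,c_1^2/n)$, bounds $\rho(A)\le\|A\|_1\le 1-\mu\alpha/2$ (possible without any reweighting because the off-diagonal entry $\tfrac{3}{2}\alpha L$ is itself $O(\alpha)$), and then computes $(I-A)^{-1}b$ explicitly, whereas you collapse everything into a scalar Lyapunov function with the $\alpha$-dependent weight $\beta\asymp \alpha L/(n(1-\lambda))$.

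That reweighting is where your argument has a genuine gap. The theorem bounds $\Ex\|\bar{e}_x^K\|^2+\tfrac{1}{n}\Ex\|\x^K-\one\otimes\bar{x}^K\|^2$, i.e.\ the consensus error enters with weight $1/n$. Your $V^K$ weights it only by $\beta\asymp\alpha L/(n(1-\lambda))$, and after the tuning $\alpha=\tilde{\cO}(1/(\mu K))$ this is smaller than $1/n$ by the factor $\alpha L/(1-\lambda)\to 0$; converting your bound on $V^K$ into a bound on $\tfrac{1}{n}\Ex\|\x^K-\one\otimes\bar{x}^K\|^2$ therefore costs a factor $(1-\lambda)/(\alpha L)\asymp(1-\lambda)\mu K/L$, which destroys the stated rate. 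You must either keep an $\alpha$-independent weight of order $1/n$ on the consensus block (exactly what the paper's matrix recursion \eqref{linear_dynamical_error2} with the $\|\cdot\|_1$-norm bound achieves, since $\|\x^K-\one\otimes\bar{x}^K\|^2\le c_1^2\|\hat{\x}^K\|^2$ by \eqref{avg_hat_bound}), or add a second pass: unroll \eqref{ineq_cons} once more, feeding in the trajectory bound on $\Ex\|\bar{e}_x^k\|^2$ obtained from $V^k$, to show $\tfrac{1}{n}\Ex\|\hat{\x}^K\|^2$ is itself $O(\alpha^2)$, which then lands in the $K^{-2}$ term. Either fix is routine, but as written the last step does not yield the claimed inequality. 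A separate, minor point: your anticipated obstacle about the transient exponent is moot. The paper's exponent is simply $\mu\alpha_{\max}K$ with $\alpha_{\max}=\Theta\big(\min\{(1-\lambda)/L,\ \sqrt{\mu}(1-\lambda)/(L^{3/2}\lambda^2),\ \mu(1-\lambda)/(L^2\lambda^4)\}\big)$, and $\tilde{\cO}(\cdot)$ absorbs the condition-number constants, so $\exp[-(1-\lambda)K]$ is obtained without ever invoking the intrinsic $(1-\gamma)^K$ decay of the consensus block; the $(a_0+\varsigma_\star^2)$ prefactor stays free of $1/(1-\lambda)$ because the stepsize cap makes $\beta\|\hat{\x}^0\|^2$ (the paper's $a^\star/\underline{\alpha}^2$) of order $\varsigma_\star^2$.
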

	\begin{proof}
		See Appendix \ref{app:thm_strong_cvx_proof}.
	\end{proof} 
	In comparison to \cite{koloskova2021improved}, our results removes the log factor $\cO(\log(\frac{1}{1-\lambda}) )$ and holds for any number of iteration $K$ -- see Table \ref{table}. Moreover, observe that for the strongly-convex case, unlike \cite{koloskova2021improved}, we do not have a network term $1/(1-\lambda)$ multiplying the highest order exponential term $\exp(\cdot)$. 
	
	\begin{remark}[\sc \small Improvement upon nonconvex GT rates] \rm
		The GT rates for convex and strongly-convex settings provided in Theorems \ref{thm_cvx_convergence} and \ref{thm_strong_cvx_convergence} improve upon the GT rates for non-convex \cite{alghunaim2021unified,koloskova2021improved}  and PL condition \cite{alghunaim2021unified} settings. For example, observe  from Table \ref{table} that the GT rate under the PL condition \cite{alghunaim2021unified} is $\frac{1}{n  \epsilon}+
		\left(\frac{1}{(1-\lambda)^{1/2}} + \frac{1}{(1-\lambda) \sqrt{n}} \right) \frac{1}{\sqrt{\epsilon}}
		+ \frac{1}{1-\lambda} \log \left(\frac{(a_0+\varsigma_{\star}^2)}{\epsilon}\right)$, which  has an additional term $\frac{1}{(1-\lambda) \sqrt{n}}  \frac{1}{\sqrt{\epsilon}}$ compared to our strongly-convex rate. \qd
	\end{remark}

	\begin{remark}[\sc \small Comparison with Exact-diffusion/D$^2$ \cite{yuan2021removing}] \rm
		For the convex case,  the difference with Exact-diffusion/D$^2$ \cite{yuan2021removing} is in the highest order term.  Exact-diffusion/D$^2$  is $\left(\frac{a_0}{(1-\lambda) } + \varsigma_\star^2 \right) \frac{1}{K}$ while GT is $\left(\frac{a_0}{(1-\lambda) } + \frac{\varsigma_\star^2}{(1-\lambda) } \right) \frac{1}{K}$ where GT has $1/(1-\lambda)$ multiplied by $\varsigma_\star^2$, which is slightly worse than Exact-diffusion/D$^2$. A similar conclusion can be reached for the strongly-convex scenario. \qd
	\end{remark}
	
	\section{Simulation results}
	This section will present several numerical simulations that compare Gradient-tracking with centralized SGD (CSGD) and decentralized SGD (DSGD). 
	
	\textbf{Linear regression.} We consider solving a strongly-convex problem \eqref{min_consenus} with $f_i(x) = \frac{1}{2}\mathbb{E}(a_i^\top x - b_i)^2$ in which random variable $a_i \sim \mathcal{N}(0, I_d)$, $b_i = a_i\tran x_i^\star + n_i$ for some local solution $x_i^\star \in \mathbb{R}^d$ and $n_i \sim \mathcal{N}(0, \sigma_n^2)$. The stochastic gradient is calculated as $\nabla F_i(x) = a_i(a\tran_i x - b_i)$. Each local solution $x_i^\star = x^\star + v_i$ is generated using the formula $x_i^\star = x^\star + v_i$, where $x^\star \sim \mathcal{N}(0, I_d)$ is a randomly generated global solution while $v_i \sim \mathcal{N}(0, \sigma_v^2 I_d)$ controls similarities between local solutions.
	
	Generally speaking, a large $\sigma_v^2$ will result in local solutions $\{x_i^\star\}_{i=1}^n$ that are vastly different from one another. We used $d=5$, $\sigma
	_n^2 = 0.01$, and $\sigma_v^2 = 1$ in simulations. Experiments are carried out on ring and exponential graphs of size $n=30$, respectively. Each algorithm's stepsize (learning rate) is carefully tuned so that they all converge to the same relative mean-square-error. Each simulation is run $30$ times, with the solid line representing average performance and the shadow representing standard deviation. The results are depicted in Fig. \ref{fig-linear-regression}. The relative error is shown on the $y$-axis as $\frac{1}{n} \sum_{i=1}^n \mathbb{E} \|x^k_i - x^\star\|^2/\|x^\star\|^2$. When running over the exponential graph which has a well-connected topology with $1 - \lambda = 0.33$, it is observed that both DSGD and Gradient-tracking perform similarly to CSGD. However, when running over the ring graph which has a badly-connected topology with $1 - \lambda = 0.0146$, DSGD gets far slower than CSGD due to its sensitivity to network topology. In contrast, Gradient-tracking just gets a little bit slower than CSGD and performs far better than DSGD. This phenomenon coincides with our established complexity bound in Table \ref{table_transient_time_PL_bound} showing that GT has a much weaker dependence on network topology (i.e., $1-\lambda$). 
	
	\begin{figure}[t]
		\centering 
		\includegraphics[width=4.3cm]{./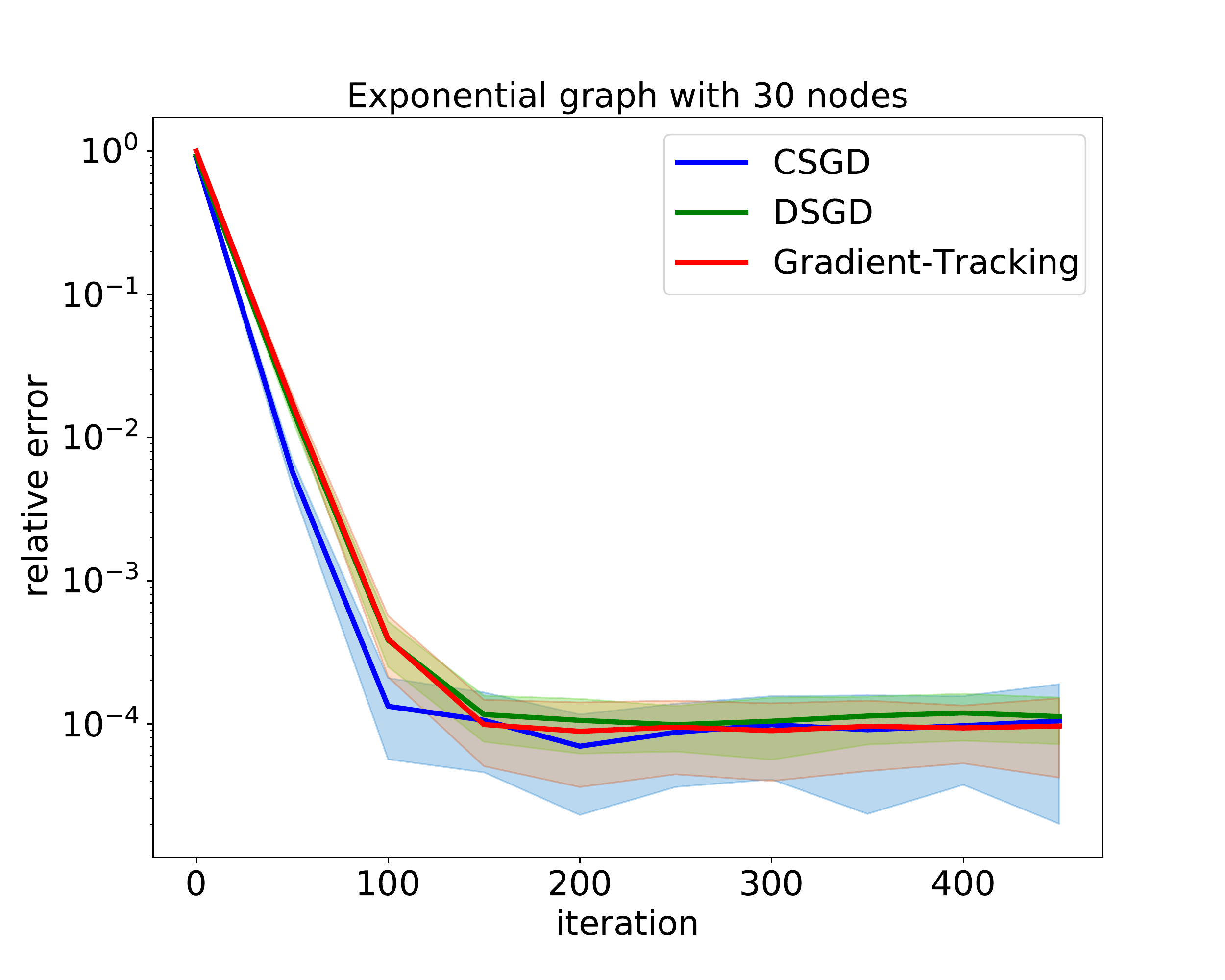}
		\includegraphics[width=4.3cm]{./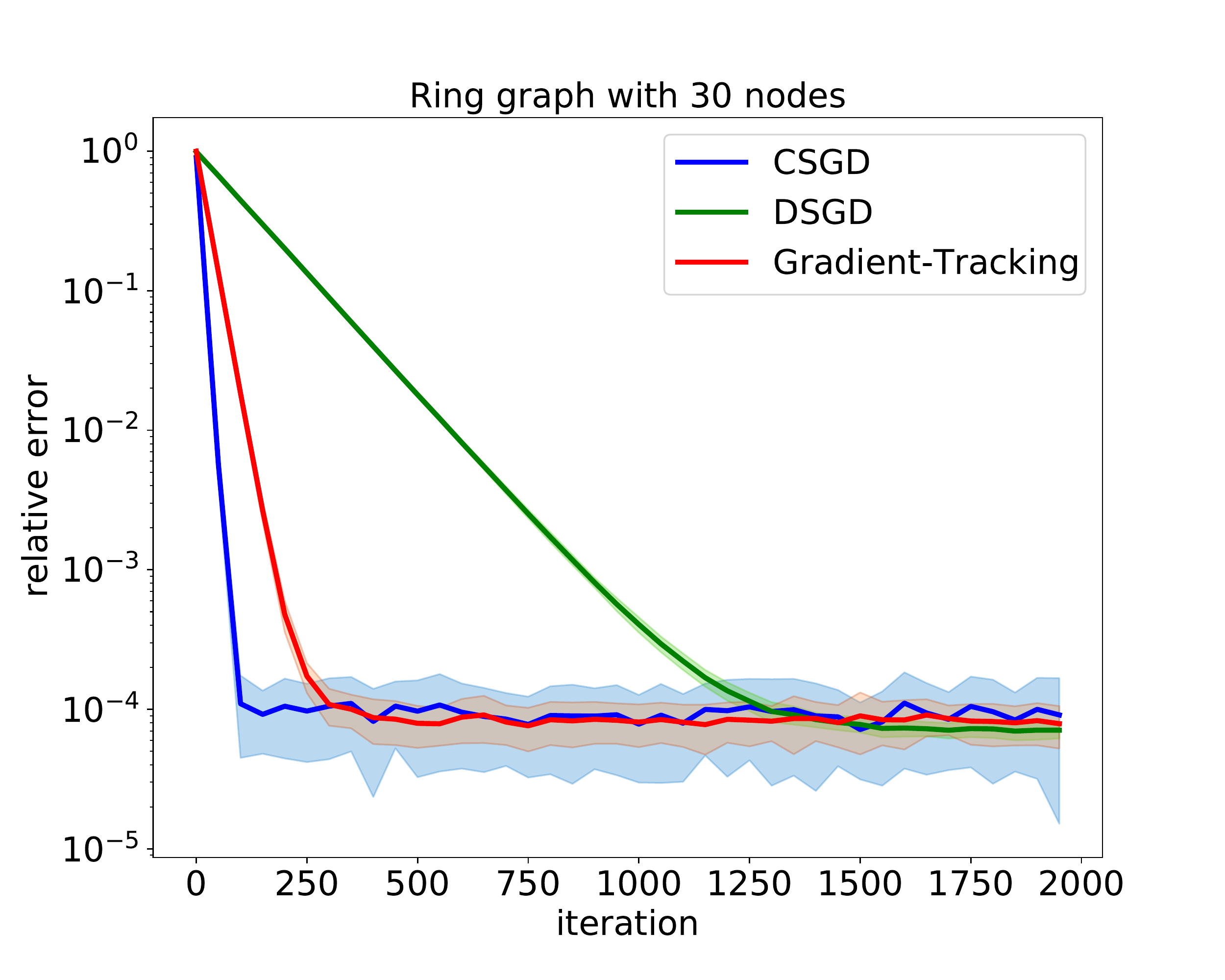}
		\caption{\small Comparison between different algorithms over exponential and ring graphs when solving distributed linear regression with heterogeneous data distributions. The spectral gap $1-\lambda$ is $0.33$ and $0.0146$ for exponential and ring graphs, respectively.}
		\label{fig-linear-regression}
	\end{figure}

	
	\textbf{Logistic regression.} We next consider the logistic regression problem, which has $f_i(x) = \mathbb{E} \ln(1+\exp(-y_i h_i\tran x)) $ where $(h_i, y_i)$ represents the training dataset stored in node $i$ with $h_i \in \mathbb{R}^d$ as the feature vector and $y_i \in-\{1,+1\}$ as the label. This is a convex but not strongly-convex problem. Similar to the linear regression experiments, we will first generate a local solution $x^\star_i$ based on $x^\star_i = x^\star + v_i$ using $v_i \sim \mathcal{N}(0, \sigma_v^2 I_d)$. We can generate local data that follows distinct distributions using $x_i^\star$. To this end, we generate each feature vector $h_i \sim \cN(0, I_d)$ at node $i$. To produce
	the corresponding label $y_{i}$, we create a random variable $z_{i} \sim \cU(0,1)$. If $z_{i} \le 1 + \exp(-y_{i}h_{i}\tran x_i^\star)$, we set $y_{i} = 1$; otherwise $y_{i} = -1$. Clearly, solution $x_i^\star$ controls the distribution of the labels. By adjusting $\sigma^2_v$, we can easily control data heterogeneity. The remaining parameters are the same as in linear regression experiments. The performances of each algorithm in logistic regression depicted in Fig.~\ref{fig-log-regression} are consistent with that in linear regression, i.e., Gradient-tracking performs well for both graphs while DSGD has a significantly deteriorated performance over the ring graph due to its less robustness to network topology.
	
	\begin{figure}[t]
		\centering 
		\includegraphics[width=4.3cm]{./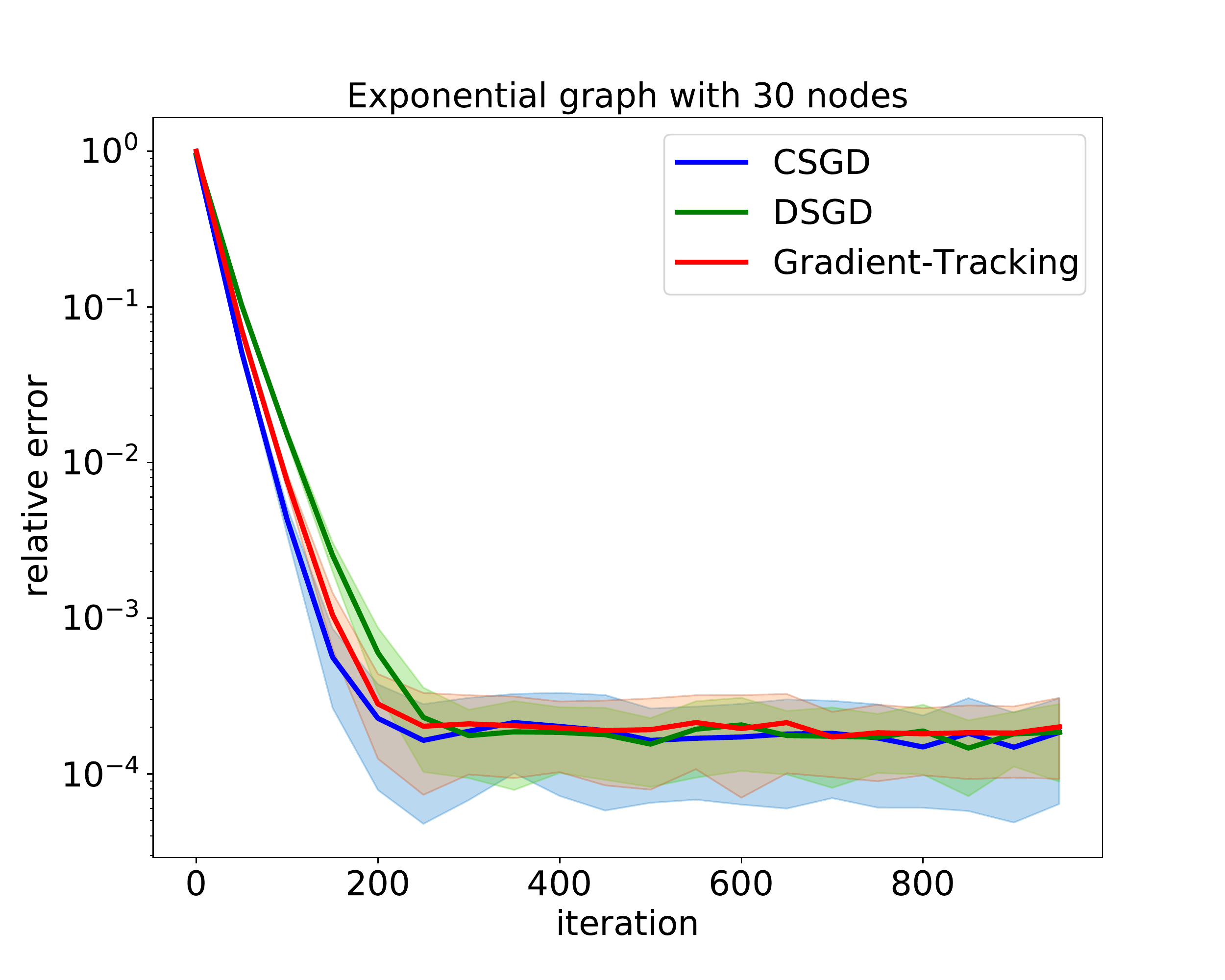}
		\includegraphics[width=4.3cm]{./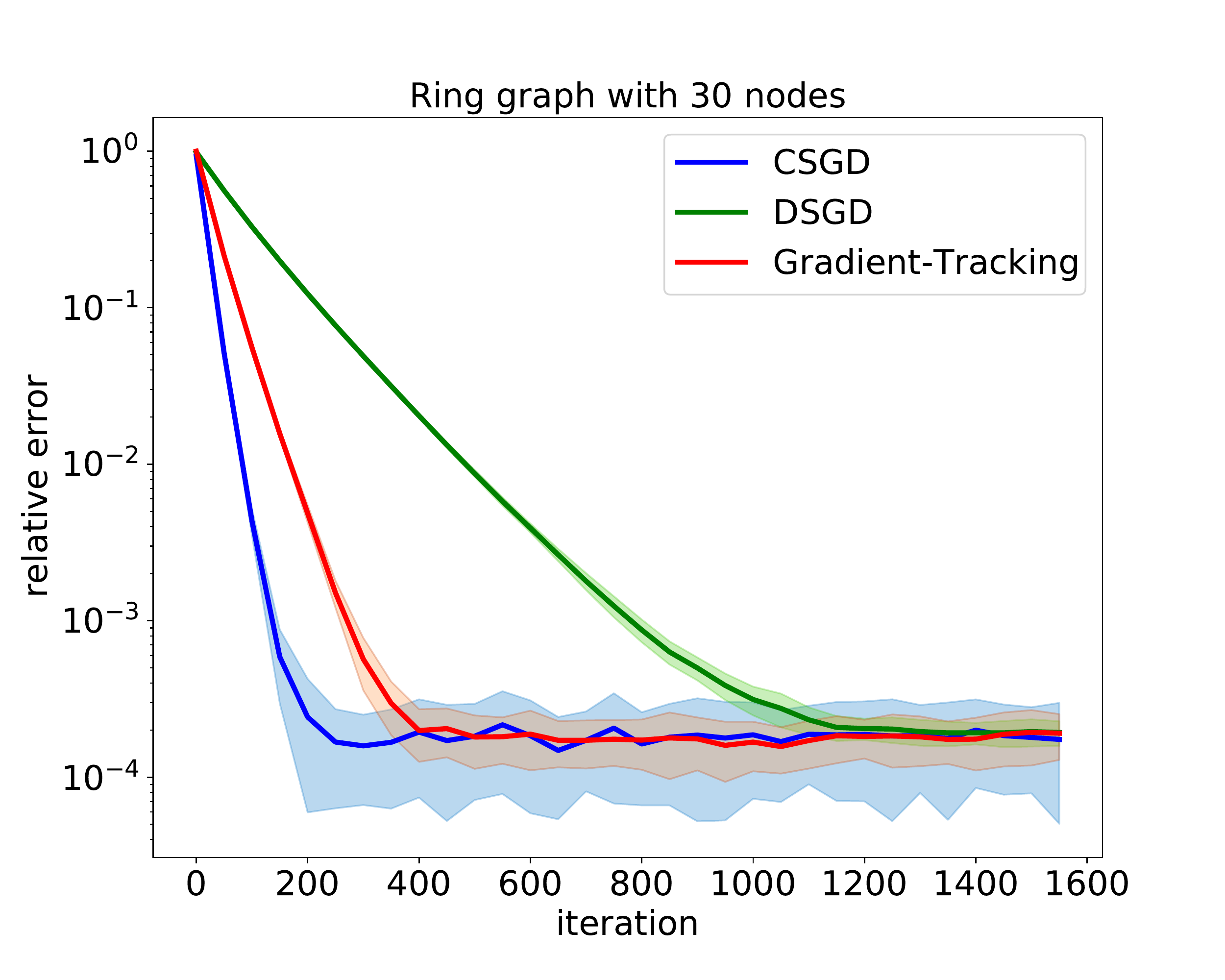}
		\caption{\small Comparison between different algorithms over exponential and ring graphs when solving distributed logistic regression.}
		\label{fig-log-regression}
	\end{figure}

	\appendices
	\section{Decomposed Error Recursion \\
		Prof of Lemma \ref{lemma:error_decomposed}} \label{app:lemma_decomposition}
	Using the decomposition \eqref{W_decompositon} and $\B=\I-\W$:
	\begin{subequations} \label{AB_decompositon}
		\begin{align}
			\W^2&=\U \mathbf{\Sigma}^2 \U^{-1} = \begin{bmatrix}
				\one \otimes I_d & \hat{\U}
			\end{bmatrix} \begin{bmatrix}
				I_d & 0 \\
				0 & \mathbf{\Lambda}^2
			\end{bmatrix} \begin{bmatrix}
				\frac{1}{n} \one\tran \otimes I_d \\ \hat{\U}\tran
			\end{bmatrix} \\
			\B&=\U (\I-\mathbf{\Sigma}) \U^{-1} = \begin{bmatrix}
				\one \otimes I_d & \hat{\U}
			\end{bmatrix} \begin{bmatrix}
				0 & 0 \\
				0 & \I-\mathbf{\Lambda}
			\end{bmatrix} \begin{bmatrix}
				\frac{1}{n} \one\tran \otimes I_d \\ \hat{\U}\tran
			\end{bmatrix},
		\end{align}
	\end{subequations}
	with   $\I-\mathbf{\Lambda}>0$.  	Substituting \eqref{AB_decompositon} into \eqref{error_uda1} and multiplying both sides by $\bdiag\{\U^{-1},\U^{-1}\}$ on the left, we obtain
	\begin{equation} \label{error_uda2}
		\begin{aligned} 
			\begin{bmatrix}
				\U^{-1} 	\tilde{\x}^{k+1} \\
				\U^{-1} 	\tilde{\z}^{k+1}
			\end{bmatrix}	&= \begin{bmatrix}
				2\mathbf{\Sigma}^2 -\I & - (\I- \mathbf{\Sigma}) \\
				\I- \mathbf{\Sigma} & \I
			\end{bmatrix} \begin{bmatrix}
				\U^{-1} 	\tilde{\x}^{k} \\
				\U^{-1} 	\tilde{\z}^{k}
			\end{bmatrix}  \\
			& \quad - \alpha \begin{bmatrix}
				\mathbf{\Sigma}^2 \U^{-1}  \big(\grad \mathbf{f}(\mathbf{x}^{k})-\grad \f(\x^\star)+\v^k \big) \\
				0
			\end{bmatrix}.
		\end{aligned}
	\end{equation}
	Since $\tilde{\z}^{k}$ always lies in the range space of $\B$, we have  $(\one_n\tran \otimes I_d) \tilde{\z}^{k}=0$ for all $k$. Using, the structure of $\U$ from \eqref{W_decompositon} and the definitions \eqref{avg_def_quantites}, we have
	\begin{align*}
		\U^{-1}  \tilde{\x}^{k}&=\begin{bmatrix}
			\bar{e}_x^k  \vspace{0.5mm} \\
			\hat{\U}\tran \tilde{\x}^{k}
		\end{bmatrix}, \quad \U^{-1}  \tilde{\z}^{k} =\begin{bmatrix}
			0  \vspace{0.5mm}  \\
			\hat{\U}\tran \tilde{\z}^{k}
		\end{bmatrix} 
		\\
		\U^{-1}  \grad \mathbf{f}(\mathbf{x})&=\begin{bmatrix}
			\overline{\grad f}(\x^k)  \vspace{0.5mm} \\
			\hat{\U}\tran \grad \mathbf{f}(\mathbf{x})
		\end{bmatrix}.
	\end{align*}  
	Thus, by using the structure of $\mathbf{\Sigma}^2$ and $\mathbf{\Sigma}_b^2$ given in \eqref{AB_decompositon}, we can rewrite \eqref{error_uda2} as
	\begin{subequations} \label{error_uda_tranformed_nondiag}
		\begin{align}
			\bar{e}_x^{k+1} &=\bar{e}_x^{k} - \alpha  \big(\overline{\grad f}(\x^k)-\overline{\grad f}(\x^\star) \big)  \\
			\begin{bmatrix}
				\hat{\U}\tran \tilde{\x}^{k+1}  \\
				\hat{\U}\tran \tilde{\z}^{k+1} 
			\end{bmatrix}&= \begin{bmatrix}
				2\mathbf{\Lambda} -\I & - (\I-\mathbf{\Lambda}) \\
				\I-\mathbf{\Lambda} & ~\I
			\end{bmatrix}  \begin{bmatrix}
				\hat{\U}\tran \tilde{\x}^{k}    \\
				\hat{\U}\tran \tilde{\z}^{k}
			\end{bmatrix}   \nonumber \\
			& \quad - \alpha \begin{bmatrix}
				\mathbf{\Lambda}^2 \hat{\U}\tran   \big(\grad \mathbf{f}(\mathbf{x}^{k})-\grad \f(\x^\star)\v^k \big) \\
				0
			\end{bmatrix}.
		\end{align}
	\end{subequations}
	Let
	\begin{align}
		\mathbf{G} \define \begin{bmatrix}
			2\mathbf{\Lambda} -\I & - (\I-\mathbf{\Lambda}) \\
			\I-\mathbf{\Lambda} & ~\I
		\end{bmatrix} .
	\end{align}
	It is important to note that the matrix $\G$ is identical to the one studied in \cite{alghunaim2021unified} (for nonconvex case). Therefore,   following  the same arguments used in  \cite[Appendix B]{alghunaim2021unified}, we can decompose it as  $\mathbf{G}  = \hat{\V} \mathbf{\Gamma} \hat{\V}^{-1}$
	for  matrices $\hat{\V}$ and $\mathbf{\Gamma}$ satisfying the conditions in the lemma. Multiplying the second equation  in \eqref{error_uda_tranformed_nondiag} by $ \hat{\V}^{-1}$, we arrive at \eqref{error_diag_transformed}.
	
	\section{Coupled Error Inequalities \\
		Proof of Lemma \ref{lemma_coupled}} 
	\label{app:lemma_coupled}
	\subsection*{\bf Proof of inequality \eqref{ineq_average}}
	The proof adjusts the argument from \cite[Lemma 8]{koloskova2020unified}. Using  \eqref{error_average_diag} and  Assumption \ref{assump:noise}, we have
	\begin{align}
		&\Ex	[\|\bar{e}_x^{k+1}\|^2| \bm{\cF}^{k}] \nonumber \\
		&= \|	\bar{e}_x^{k} - \tfrac{\alpha}{n} \textstyle \sum_{i=1}^{n}(\nabla f_i(x^{k}_i) - \nabla f_i(x^\star))\|^2 + \alpha^2 \Ex [\|	\bar{v}^k\|^2| \bm{\cF}^{k}] \nonumber \\
		& \leq \|	\bar{e}_x^{k} - \tfrac{\alpha}{n} \textstyle \sum_{i=1}^{n}(\nabla f_i(x^{k}_i) - \nabla f_i(x^\star))\|^2 
		+ \frac{\alpha^2 \sigma^2}{n} \nonumber \\
		&= \|	\bar{e}_x^{k} \|^2  + \alpha^2 \| \tfrac{1}{n}\textstyle \sum\limits_{i=1}^{n} (\nabla f_i(x^{k}_i) - \nabla f_i(x^\star) )\|^2 
		\nonumber \\
		& \quad - \textstyle  \frac{2 \alpha}{n}    \sum\limits_{i=1}^{n} 	\left\langle \nabla f_i(x^{k}_i), \bar{e}_x^{k} \right\rangle 	+ \frac{\alpha^2 \sigma^2}{n} , 
		\label{xnbsdh}
	\end{align}
	where we used $\sum_{i=1}^{n} \nabla f_i(x^\star)=0$. 	  The second term on the right can be bounded as follows:
	\begin{align}
		& \alpha^2 \| \tfrac{1}{n}\textstyle \sum\limits_{i=1}^{n} \big(\nabla f_i(x^{k}_i) -\nabla f_i(\bar{x}^{k}) +\nabla f_i(\bar{x}^{k}) - \nabla f_i(x^\star) \big)\|^2 \nonumber \\
		& \leq  2 \alpha^2 \| \tfrac{1}{n}\textstyle \sum\limits_{i=1}^{n} (\nabla f_i(x^{k}_i) -\nabla f_i(\bar{x}^{k})) \|^2 \nonumber \\
		& \quad + 2  \alpha^2 \|\tfrac{1}{n} \textstyle \sum\limits_{i=1}^{n} ( \nabla f_i(\bar{x}^{k})-\nabla f_i(x^\star) )\|^2 \nonumber \\
		& \leq  \tfrac{2 \alpha^2}{n} \textstyle \sum\limits_{i=1}^{n} \|  \nabla f_i(x^{k}_i) -\nabla f_i(\bar{x}^{k}) \|^2
		\\
		& \quad  + 2 \alpha^2 \|  \nabla f(\bar{x}^{k}) - \nabla f(x^\star)\|^2 \nonumber \\
		& \leq  \tfrac{2 \alpha^2 L^2}{n}  \| \x^{k} - \one \otimes \bar{x}^{k} \|^2 + 2 \alpha^2  \|  \nabla f(\bar{x}^{k}) -\nabla f(x^\star)\|^2 \nonumber \\
		& \leq  \tfrac{2 \alpha^2 L^2}{n}  \| \x^{k} - \one \otimes \bar{x}^{k} \|^2 + 4L \alpha^2   (   f(\bar{x}^{k})-f(x^\star) ), \label{b_ncross}
	\end{align}
	where  the first two inequalities follows from Jensen's inequality. The third inequality follows from the Lipschitz gradient assumption. In the last inequality, we used the $L$-smoothness property of the aggregate function \cite{nesterov2013introductory}:  
	\[
	\| \nabla f(\bar{x}^{k})-\nabla f(x^\star)\|^2 \leq 2L \big( f(\bar{x}^{k})-f(x^\star) \big).
	\]
	Note that   for $L$-smooth and $\mu$-strongly-convex function $f$, it holds that \cite{nesterov2013introductory}:
	\begin{subequations}
		\begin{align}
			f(x)-f(y)-\tfrac{L}{2} \|x-y\|^2 &\leq \langle \grad f(y) ,(x-y) \rangle \label{bound:L_smooth_function} \\
			f(x)-f(y)+\tfrac{\mu}{2} \|x-y\|^2 &\leq \langle \grad f(x) , (x-y) \rangle. \label{bound:mu_sc_function} 
		\end{align}
	\end{subequations}
	Using these inequalities, the cross term in \eqref{b_ncross} can be bounded by
	\begin{align}
		& -\tfrac{2 \alpha}{n} \textstyle \sum\limits_{i=1}^{n} 	  \langle \nabla f_i(x^{k}_i), \bar{e}_x^{k} \rangle \nonumber \\
		&= \tfrac{2 \alpha}{n}     \textstyle  	\sum\limits_{i=1}^{n} \big(- \langle \nabla f_i(x^{k}_i),   \bar{x}^{k}-x_i^{k} \rangle - \langle \nabla f_i(x^{k}_i), x_i^{k}-x^\star \rangle  \big) \nonumber \\
		& \leq  \tfrac{2 \alpha}{n}    \textstyle  \sum\limits_{i=1}^{n} 	\bigg( - f_i(\bar{x}^{k})+f_i(x_i^{k})+ \tfrac{L}{2} \|\bar{x}^{k}-x_i^{k} \|^2 \nonumber \\
		& \quad -\tfrac{\mu}{2} \|x_i^{k}-x^\star\|^2-f_i(x_i^{k})+f_i(x^\star ) \bigg) \nonumber \\
		& \leq -2 \alpha \big( f(\bar{x}^{k})-f(x^\star)\big) 
		\nonumber \\
		& \quad  + \tfrac{L \alpha}{n}    \textstyle  \sum\limits_{i=1}^{n} 	 \|\bar{x}^{k}-x_i^{k} \|^2 - \mu \alpha \|\bar{x}^{k}-x^\star\|^2  \nonumber \\
		& = -2 \alpha \big( f(\bar{x}^{k})-f(x^\star)\big)  + \tfrac{L \alpha}{n}    \| \x^{k} - \one \otimes \bar{x}^{k} \|^2 - \mu \alpha \|\bar{e}_x^{k}\|^2,  \label{b_cross}
	\end{align}
	where the last inequality holds due to $-\frac{1}{n} \sum_{i=1}^n \|x_i^{k}-x^\star\|^2 \leq -  \|\frac{1}{n} \sum_{i=1}^n (x_i^{k}-x^\star)\|^2$.	 Substituting \eqref{b_ncross} and \eqref{b_cross} into \eqref{xnbsdh} and taking expectation, we obtain:
	\begin{align}
		\Ex	\|\bar{e}_x^{k+1}\|^2 
		& \leq (1- \mu \alpha)  \Ex \|	\bar{e}_x^{k}\|^2  -2 \alpha (1-2L \alpha )\Ex  \big( f(\bar{x}^{k})-f(x^\star)\big)     \nonumber \\
		& \quad + \tfrac{\alpha L}{n} \left(1+ 2 \alpha L \right)  \Ex \| \x^{k} - \one \otimes \bar{x}^{k} \|^2  + \tfrac{\alpha^2 \sigma^2}{n} \nonumber \\ 
		& \leq (1- \mu \alpha) \Ex \|	\bar{e}_x^{k}\|^2  - \alpha  \big( \Ex f(\bar{x}^{k})-f(x^\star)\big) \nonumber \\
		& \quad  + \frac{3 L \alpha}{2n} \Ex \| \x^{k} - \one \otimes \bar{x}^{k} \|^2 + \frac{\alpha^2 \sigma^2}{n},
		\label{last_cons}
	\end{align}
	where the last step uses $\alpha \leq \tfrac{1}{4L}$.
	Using \eqref{UUtran}, we have $\|\hat{\U}\tran \tilde{\x}^{k}\|^2=\|\hat{\U}\tran \hat{\U} \tilde{\x}^{k}\|^2=\|\x^{k} - \one \otimes \bar{x}^{k}\|^2$. Hence,
	\begin{align} \label{avg_hat_bound}
		\|\x^{k} - \one \otimes \bar{x}^{k}\|^2  \overset{\eqref{x_hat_def}}{=} \| \hat{\V} \hat{\x}^{k}\|^2 -
		\|\hat{\U}\tran \tilde{\z}^{k}\|^2 \leq  \|\hat{\V}\|^2 \| \hat{\x}^{k}\|^2 .
	\end{align}
	Substituting the above into \eqref{last_cons} yields \eqref{ineq_average}.

	\subsection*{\bf Proof of inequality \eqref{ineq_cons}}
	From \eqref{error_hat_diag}, we have
	\begin{align*}
		& \Ex [ \|\hat{\x}^{k+1}\|^2 |  \bm{\cF}^{k} ] \\
		&= \Ex  \left\| \mathbf{\Gamma} \hat{\x}^{k} - \alpha \hat{\V}_l^{-1} 
		\mathbf{\Lambda}^2 \hat{\U}\tran   \big(\grad \mathbf{f}(\mathbf{x}^{k})-\grad \f(\x^\star)+\v^k \big) |  \bm{\cF}^{k} \right\|^2 \\
		\overset{\eqref{noise_bound_eq_mean}}&{=}   \left\| \mathbf{\Gamma} \hat{\x}^{k} - \alpha  \hat{\V}_l^{-1} 
		\mathbf{\Lambda}^2 \hat{\U}\tran   \big(\grad \mathbf{f}(\mathbf{x}^{k})-\grad \f(\x^\star)\big)  \right\|^2 \\
		& \quad + \alpha^2  \Ex  \left\|   \hat{\V}_l^{-1} \mathbf{\Lambda}^2 \hat{\U}\tran  \v^k \big |  \bm{\cF}^{k} \right\|^2 \\
		\overset{\eqref{noise_bound_eq_variance}}&{\leq}      \left\| \mathbf{\Gamma} \hat{\x}^{k} - \alpha  \hat{\V}_l^{-1} 
		\mathbf{\Lambda}^2 \hat{\U}\tran   \big(\grad \mathbf{f}(\mathbf{x}^{k})-\grad \f(\x^\star)\big)  \right\|^2 \\
		& \quad +  \alpha^2 \|\hat{\V}_l^{-1} \|^2 \|	\mathbf{\Lambda}^2\|^2 \|\hat{\U}\tran\|^2 n \sigma^2. 
	\end{align*}
	Now,	for any vectors $\a$ and $\b$, it holds from Jensen's inequality that $\|\a+\b\|^2 \leq \frac{1}{\theta} \|\a\|^2 + \frac{1}{1-\theta}\|\b\| $ for any $\theta \in (0,1)$. Utilizing this bound with $\theta=\gamma \define \|\mathbf{\Gamma}\|$ on the first term of the previous inequality, we get
	\begin{align*}
		& \Ex [ \|\hat{\x}^{k+1}\|^2 |  \bm{\cF}^{k} ] \\
		& \leq   \gamma \| \hat{\x}^{k}\|^2 + \tfrac{\alpha^2 \| \hat{\V}_l^{-1} \|^2 \|	\mathbf{\Lambda}^2\|^2 \|\hat{\U}\tran\|^2}{ (1-\gamma)} \|\grad \mathbf{f}(\mathbf{x}^{k})-\grad \f(\x^\star)  \|^2 \\
		& \quad +  \alpha^2 \|\hat{\V}_l^{-1} \|^2 \|	\mathbf{\Lambda}^2\|^2 \|\hat{\U}\tran\|^2 n \sigma^2.
	\end{align*}
	Taking expectation and using $\|\hat{\U}\tran\| \leq 1$, $\|\hat{\V}_l^{-1} \|^2 \leq \|\hat{\V}^{-1} \|^2$, and $\|	\mathbf{\Lambda}^2\|^2 \leq \lambda^4$ yield our result \eqref{ineq_cons}.

	\section{Proof of Theorem \ref{thm_cvx_convergence}}
	\label{app:thm_cvx_proof}
	Using similar argument to \eqref{b_ncross} and \eqref{avg_hat_bound}, it holds that
	\begin{align*}
		&\|\grad \mathbf{f}(\mathbf{x}^{k})-\grad \f(\x^\star)  \|^2 
		\\
		&\leq 2\| \grad \mathbf{f}(\one \otimes \bar{x}^{k})- \grad \mathbf{f}(\x^\star) \|^2  +2 \|\grad \mathbf{f}( \mathbf{x}^{k})-  \grad \mathbf{f}(\one \otimes \bar{x}^{k}) \|^2  \\
		& \leq 4 n L [f(\bar{x}^{k}) - f(x^\star)]  +  2 c_1^2 L^2  \| \hat{\x}^{k}\|^2.
	\end{align*}
	Plugging the above bound into \eqref{ineq_cons} gives
	\begin{align*} 
		\Ex  \|\hat{\x}^{k+1}\|^2 
		& \leq   \left(\gamma  +  \tfrac{2 \alpha^2 c_1^2  c_2^2  L^2  \lambda^4}{ (1-\gamma)} \right) \Ex \| \hat{\x}^{k}\|^2 
		\nonumber \\
		& \quad + \tfrac{4 \alpha^2 c_2^2  L    \lambda^4 n }{ (1-\gamma)} \Ex \tilde{f}(\bar{x}^k)     
		+  \alpha^2 c_2^2  \lambda^4 n \sigma^2  \nonumber \\
		& \leq   \bar{\gamma} \Ex \| \hat{\x}^{k}\|^2 
		+ \tfrac{4  \alpha^2 c_2^2  L    \lambda^4 n }{ (1-\gamma)} \Ex \tilde{f}(\bar{x}^k)   
		+  \alpha^2  c_2^2 \lambda^4 n \sigma^2 ,
	\end{align*}
	where  $\tilde{f}(\bar{x}^k)  \define f(\bar{x}^{k}) - f(x^\star) $, $\bar{\gamma} \define \frac{1+\gamma}{2}$, and the last inequiality holds when $\gamma  +  \frac{2 \alpha^2 c_1^2  c_2^2  L^2  \lambda^4}{ (1-\gamma)} \leq \frac{1+\gamma}{2}$, which is satisfied for
	\begin{align} \label{step_cvx1}
		\alpha \leq \frac{1-\lambda}{4 c_1 c_2  L  \lambda^2}.
	\end{align}
	Iterating the last recursion (for any $k=1,2,\dots$) gives 
	\begin{align} \label{cons_ineq_nonconvex_proof}
		\Ex \|\hat{\x}^{k}\|^2 
		&\leq  
		\bar{\gamma}^{k}  \|\hat{\x}^{0}\|^2 
		+ \tfrac{4 \alpha^2 c_2^2  L \lambda^4 n }{(1-\gamma)} \textstyle \sum\limits_{\ell=0}^{k-1} \bar{\gamma}^{k-1-\ell} \Ex \tilde{f}(\bar{x}^\ell)
		\nonumber \\
		&~  + \textstyle \sum\limits_{\ell=0}^{k-1} \bar{\gamma}^{k-1-\ell}  \left( \alpha^2 c_2^2  \lambda^4 n \sigma^2 \right)  \nonumber \\
		&\leq  
		\bar{\gamma}^{k}  \|\hat{\x}^{0}\|^2  
		+    \tfrac{4 \alpha^2 c_2^2   L    \lambda^4 n }{ (1-\gamma)} \textstyle \sum\limits_{\ell=0}^{k-1} \bar{\gamma}^{k-1-\ell} \Ex \tilde{f}(\bar{x}^\ell)
		\nonumber \\
		& \quad + \tfrac{ \alpha^2 c_2^2  \lambda^4 n \sigma^2}{1-\bar{\gamma}}. 
	\end{align}  
	In the last inequality we used $\textstyle \sum_{\ell=0}^{k-1} \bar{\gamma}^{k-1-\ell} \leq \frac{1}{1-\bar{\gamma}}$. Averaging over $k=1,2\dots,K$ and using $\bar{\gamma} = \frac{1+\gamma}{2}$, it holds that
	\begin{align} 
		& \textstyle	\frac{1}{K}  \sum\limits_{k=1}^K	\Ex \|\hat{\x}^{k}\|^2 \nonumber \\ 
		& \leq	\tfrac{2 \|\hat{\x}^{0}\|^2}{(1-\gamma)K} 
		+  \tfrac{4 \alpha^2 c_2^2  L    \lambda^4 n }{ (1-\gamma) K} \textstyle \sum\limits_{k=1}^K  \sum\limits_{\ell=0}^{k-1} \left( \tfrac{1+\gamma}{2} \right)^{k-1-\ell} \Ex \tilde{f}(\bar{x}^\ell) 
		+ \tfrac{2 \alpha^2 c_2^2 \lambda^4 n \sigma^2}{1-\gamma} 
		\nonumber \\
		& \leq  
		\tfrac{2 \|\hat{\x}^{0}\|^2}{(1-\gamma)K} 
		+   \tfrac{8  \alpha^2 c_2^2  L    \lambda^4 n }{ (1-\gamma)^2 K} \textstyle \sum\limits_{k=0}^{K-1}  \Ex \tilde{f}(\bar{x}^k) 
		+ \tfrac{ 2 \alpha^2 c_2^2 \lambda^4 n \sigma^2}{1-\gamma}.
	\end{align}  
	It follows that
	\begin{align}  \label{bound_nonconv_cons_final}
		\textstyle	\frac{1}{K}  \sum\limits_{k=0}^{K-1}	\Ex \|\hat{\x}^{k}\|^2
		& \leq  
		\frac{3 \|\hat{\x}^{0}\|^2}{(1-\gamma)K} 
		+   \tfrac{8 \alpha^2 c_2^2  L    \lambda^4 n }{ (1-\gamma)^2 K} \textstyle \sum\limits_{k=0}^{K-1}  \Ex \tilde{f}(\bar{x}^k) 
		\nonumber \\
		& \quad 
		+ \frac{ 2 \alpha^2 c_2^2  \lambda^4 n \sigma^2}{1-\gamma}.
	\end{align}  
	where we added $\frac{\|\hat{\x}^{0}\|^2}{(1-\gamma)K}$ and used $\frac{\|\hat{\x}^{0}\|^2}{K} \leq \frac{\|\hat{\x}^{0}\|^2}{(1-\gamma)K}$. 
	Now when $\mu=0$, we can rearrange \eqref{ineq_average} to get
	\begin{align} 
		\Ex (f(\bar{x}^{k})-f(x^\star)) 	\
		& \leq \frac{1}{\alpha} \left( \Ex \|	\bar{e}_x^{k}\|^2 - \Ex	\|\bar{e}_x^{k+1}\|^2   \right)   \nonumber \\
		& \quad  + \frac{3 c_1^2 L }{2n} \Ex \| 	\hat{\x}^{k} \|^2 + \frac{\alpha \sigma^2}{n}.  
	\end{align}
	Averaging over $k=0,\ldots,K-1$ ($K \geq 1$), it holds that
	\begin{align} \label{sum_grad_ineq_nonconvex_proof}
		& \textstyle	\frac{1}{K}	 \sum\limits_{k=0}^{K-1} \Ex \tilde{f}(\bar{x}^{k})
		\leq \frac{\|	\bar{e}_x^{0}\|^2}{\alpha K}	  
		+ \frac{3 c_1^2 L }{2n K}	 \sum\limits_{k=0}^{K-1}   \Ex  \|\hat{\x}^k\|^2 
		+\frac{\alpha \sigma^2}{n}.  
	\end{align}		
	Multiplying inequality \eqref{bound_nonconv_cons_final} by $2 \times \frac{3 c_1^2 L }{2n}$, adding to \eqref{sum_grad_ineq_nonconvex_proof}, and rearranging we obtain 
	\begin{align}  \label{appppppp}
		&\left(1-\tfrac{24  \alpha^2 c_1^2 c_2^2 L^2 \lambda^4  }{(1-\gamma)^2}  \right)	\frac{1}{K} 	\textstyle  \sum\limits_{k=0}^{K-1} \Ex \tilde{f}(\bar{x}^{k}) + \frac{3 c_1^2 L }{2n K}	 \sum\limits_{k=0}^{K-1}   \Ex  \|\hat{\x}^k\|^2   \nonumber \\
		&  ~\leq \frac{\|	\bar{e}_x^{0}\|^2}{\alpha K}	   
		+ 	\frac{9  c_1^2  L   \|\hat{\x}^{0}\|^2}{(1-\gamma) n K} 
		+\frac{\alpha \sigma^2}{n}
		+  \frac{6 \alpha^2 c_1^2 c_2^2  L  \lambda^4  \sigma^2}{1-\gamma} .
	\end{align}	
	Notice from \eqref{x_hat_def} that
	\begin{align} 
		\|	\hat{\x}^{0} \|^2 \leq  \|\hat{\V}^{-1}\|^2 \left( \|\hat{\U}\tran \tilde{\x}^{0}\|^2 + \|	\hat{\U}\tran \tilde{\z}^{0} \|^2 \right).
	\end{align}
	If we start from consensual initialization $\x^0=\one \otimes x^0$ and use the fact $\z^0=0$, the above reduces to  
	\begin{align} \label{hat_0_bound}
		\|	\hat{\x}^{0} \|^2 \leq  \|\hat{\V}^{-1}\|^2 \|	\hat{\U}\tran \z^{\star} \|^2 
		\leq \frac{\alpha^2 c_2^2  \lambda^4}{(1-\lambda)^2} \|\hat{\U}\tran \grad \mathbf{f}(\mathbf{x}^\star) \|^2,
	\end{align}
	where the last step holds by using \eqref{fixed_point} and \eqref{AB_decompositon}, which implies that 
	$
	\hat{\U}\tran	 \mathbf{z}^\star  =  \alpha (\I-\mathbf{\Lambda})^{-1} \mathbf{\Lambda}^2 \hat{\U}\tran  \grad \mathbf{f}(\mathbf{x}^\star) 
	$.  Plugging the previous inequality into \eqref{appppppp} and  setting $	\frac{1}{2} \leq 1-\frac{24 \alpha^2 c_1^2 c_2^2  L^2 \lambda^4  }{(1-\gamma)^2} $, \ie,
	\begin{align} \label{step_size_nonconv_prof_2}
		\alpha \leq \frac{1-\lambda}{4 \sqrt{6}  c_1 c_2 L \lambda^2}, 
	\end{align}	
	gives
	\begin{align}  \label{Psi_k}
		\frac{1}{K}   \sum\limits_{k=0}^{K-1}  \cE_k
		&  \leq \underbrace{\frac{ \|	\bar{e}_x^{0}\|^2}{\alpha K}	   
			+a_1 \alpha + a_2 \alpha^2
		}_{\define \Psi_K}
		+ 	\frac{a^{\star} \alpha^2 }{  K},
	\end{align}	
	where we defined $\cE_k \define	\frac{1}{2}	\Ex \tilde{f}(\bar{x}^{k}) + \frac{3 c_1^2 L }{2 n}	  \Ex  \|\hat{\x}^k\|^2  $ and
	\begin{subequations} \label{a012_constants}
		\begin{align} 
			a^{\star} &\define  \frac{18  c_1^2 c_2^2   L \lambda^4    \|\hat{\U}\tran  \grad \mathbf{f}(\mathbf{x}^\star)\|^2}{(1-\lambda)^3 n } \\
			a_1 & \define  \frac{  \sigma^2}{n} \quad a_2 \define \frac{12 c_1^2 c_2^2  L  \lambda^4  \sigma^2}{1-\lambda}.
		\end{align}
	\end{subequations}
	We now select the stepsize $\alpha$ to arrive at our result in a manner similar to \cite{koloskova2020unified}. First note that the previous inequality holds for
	\begin{align} \label{alpha_underline}
		\alpha \leq \frac{1}{\underline{\alpha}} \define  \min \left\{\frac{1}{4L }, \frac{1-\lambda}{4 \sqrt{6}  c_1 c_2 L \lambda^2}\right\}.
	\end{align}
	Setting $\alpha=\min \left\{\left(\frac{ \|	\bar{e}_x^{0}\|^2}{a_1K}\right)^{\frac{1}{2}},\left(\frac{ \|	\bar{e}_x^{0}\|^2}{a_2K}\right)^{\frac{1}{3}}, \frac{1}{\underline{\alpha}} \right\} \leq \frac{1}{\underline{\alpha}}$ we have three cases: i) If  $\alpha=\frac{1}{\underline{\alpha}}$, which is smaller than both $\left(\frac{ \|	\bar{e}_x^{0}\|^2}{a_1 K}\right)^{\frac{1}{2}}$ and $\left(\frac{ \|	\bar{e}_x^{0}\|^2}{a_2 K}\right)^{\frac{1}{3}}$, then
	\begin{align*}
		\Psi_K &= \frac{ \underline{\alpha}  \|	\bar{e}_x^{0}\|^2}{ K}+ \frac{a_1}{\underline{\alpha}}+ \frac{a_2}{\underline{\alpha}^2} \\
		& \leq
		\frac{\underline{\alpha}  \|	\bar{e}_x^{0}\|^2}{ K}
		+ \left(\frac{ a_1  \|	\bar{e}_x^{0}\|^2}{K}\right)^{\frac{1}{2}}+a_2^{\frac{1}{3}}\left(\frac{ \|	\bar{e}_x^{0}\|^2}{K}\right)^{\frac{2}{3}};
	\end{align*}
	ii)	If $\alpha=\left(\frac{ \|	\bar{e}_x^{0}\|^2}{a_1K}\right)^{\frac{1}{2}} < \left(\frac{ \|	\bar{e}_x^{0}\|^2}{ a_2 K}\right)^{\frac{1}{3}}$, then
	\begin{align*}
		\Psi_K &\leq 2\left(\frac{ a_1  \|	\bar{e}_x^{0}\|^2 }{K}\right)^{\frac{1}{2}}+a_2 \left(\frac{ \|	\bar{e}_x^{0}\|^2}{a_1 K}\right) \\
		& \leq 2\left(\frac{ a_1  \|	\bar{e}_x^{0}\|^2 }{K}\right)^{\frac{1}{2}}+a_2^{\frac{1}{3}}\left(\frac{ \|	\bar{e}_x^{0}\|^2}{K}\right)^{\frac{2}{3}};
	\end{align*}
	iii)  If $\alpha=\left(\frac{ \|	\bar{e}_x^{0}\|^2}{a_2 K}\right)^{\frac{1}{3}}<\left(\frac{ \|	\bar{e}_x^{0}\|^2}{a_1K}\right)^{\frac{1}{2}}$, then
	\begin{align*}
		\Psi_K &\leq 2 a_2^{\frac{1}{3}}\left(\frac{ \|	\bar{e}_x^{0}\|^2}{K}\right)^{\frac{2}{3}}+a_1 \left(\frac{ \|	\bar{e}_x^{0}\|^2}{a_2 K}\right)^{\frac{1}{3}} \\
		&\leq 2 a_2^{\frac{1}{3}}\left(\frac{ \|	\bar{e}_x^{0}\|^2}{K}\right)^{\frac{2}{3}}+\left(\frac{ a_1  \|	\bar{e}_x^{0}\|^2}{K}\right)^{\frac{1}{2}}.
	\end{align*}
	Combining the above cases, we have
	\begin{align*}
		\Psi_K \leq 2\left(\frac{ a_1  \|	\bar{e}_x^{0}\|^2}{K}\right)^{\frac{1}{2}}+2 a_2^{1 / 3}\left(\frac{ \|	\bar{e}_x^{0}\|^2}{K}\right)^{\frac{2}{3}}+\frac{ \underline{\alpha}  \|	\bar{e}_x^{0}\|^2}{K}.
	\end{align*}
	Therefore, substituting into \eqref{Psi_k} we conclude that
	\begin{align*} 
		\frac{1}{K}   \sum\limits_{k=0}^{K-1} \cE_k
		& \leq 2\left(\tfrac{ a_1  \|	\bar{e}_x^{0}\|^2}{K}\right)^{\tfrac{1}{2}} 
		+2 a_2^{\frac{1}{ 3}}\left(\tfrac{ \|	\bar{e}_x^{0}\|^2}{K}\right)^{\tfrac{2}{3}}
		\\
		& \quad +\frac{( \underline{\alpha} \|	\bar{e}_x^{0}\|^2 + \tfrac{a^{\star}}{\underline{\alpha}^2} )}{K}.
	\end{align*}
	Plugging the constants \eqref{a012_constants} and the upper bound for $\underline{\alpha}$ in \eqref{alpha_underline}, and using $ \varsigma_{\star}^2 = \frac{1}{n} \|\hat{\U}\tran\grad \mathbf{f}(\mathbf{x}^\star)\|^2 = \frac{1}{n} \sum_{i=1}^n \|\grad f_i(x^\star)-\grad f(x^\star)\|^2$ yields our rate \eqref{cvx_theorem}.

	\section{Proof of Theorem \ref{thm_strong_cvx_convergence}}
	\label{app:thm_strong_cvx_proof} 
	Substituting the bound 
	\begin{align*}
		&	\|\grad \mathbf{f}(\mathbf{x}^{k})-\grad \f(\x^\star)  \|^2 \leq L^2 \|\mathbf{x}^{k}- \x^\star \|^2  \\ 
		&\leq 2 L^2 \|\mathbf{x}^{k}-  \one \otimes \bar{x}^{k} \|^2 
		+ 2 L^2 \| \one \otimes \bar{x}^{k}- \x^\star \|^2   \\
		& \leq   2 L^2  c_1^2 \| \hat{\x}^{k}\|^2
		+ 2 n L^2 \|	\bar{e}_x^{k}\|^2 ,
	\end{align*}
	into \eqref{ineq_cons}, we get
	\begin{align}
		&	\Ex  \|\hat{\x}^{k+1}\|^2 \nonumber \\ 
		& \leq   \left(\gamma + \tfrac{2 \alpha^2 c_1^2 c_2^2  L^2  \lambda^4}{ (1-\gamma)}\right) \Ex \| \hat{\x}^{k}\|^2 
		+ \tfrac{2 \alpha^2 c_2^2  L^2   \lambda^4 n}{  (1-\gamma)} \|\bar{e}_x^{k}\|^2
		+   \alpha^2 c_2^2 \lambda^4 n \sigma^2 \nonumber \\
		& \leq   \left(\frac{1+\gamma}{2} \right) \Ex \| \hat{\x}^{k}\|^2 + \tfrac{2 \alpha^2 c_2^2  L^2   \lambda^4 n}{  (1-\gamma)} \|\bar{e}_x^{k}\|^2
		+   \alpha^2 c_2^2 \lambda^4 n \sigma^2,
	\end{align}
	where we used condition \eqref{step_cvx1} in the last inequality. Using $- \alpha  \big( \Ex f(\bar{x}^{k})-f(x^\star)\big) \leq 0$ in \eqref{ineq_average} and combining with above, it holds that
	\begin{align} \label{linear_dynamical_error2}
		\begin{bmatrix} 
			\Ex  \|\bar{e}_x^{k+1}\|^2 \\
			\frac{  c_1^2 }{n}	\Ex \|\hat{\x}^{k+1}\|^2   
		\end{bmatrix}
		&\leq 
		\underbrace{\begin{bmatrix}
				1- \mu  \alpha  \vspace{0.5mm} 		 
				&
				\frac{3}{2} \alpha L      \vspace{0.5mm} \\
				\frac{2 \alpha^2 c_1^2 c_2^2 L^2   \lambda^4  }{ (1-\gamma)} 
				& 
				\frac{1+\gamma}{2}
		\end{bmatrix}}_{\define A}
		\begin{bmatrix} 
			\Ex  \|\bar{e}_x^k\|^2 \\
			\frac{  c_1^2 }{n} \Ex \|\hat{\x}^{k}\|^2  
		\end{bmatrix} 
		\nonumber \\
		& \quad 
		+ \underbrace{\begin{bmatrix}
				\frac{\alpha^2  \sigma^2 }{n} \\
				\alpha^2  c_1^2	c_2^2  \lambda^4  \sigma^2 
		\end{bmatrix}}_{\define b}.
	\end{align} 
	The spectral radius of the matrix $A$ can be upper bounded by:
	\begin{align} \label{rho_H}
		\rho(A) \leq \|A\|_1 &= \max \left\{
		1- \mu  \alpha
		+ 		\tfrac{2 c_1^2 c_2^2 \alpha^2 L^2 \lambda^4  }{ (1-\gamma)} 
		, ~
		\tfrac{1+\gamma}{2}+ 	\tfrac{3}{2}   L \alpha   
		\right\} \nonumber \\
		&\leq 1-\frac{\mu  \alpha}{2},
	\end{align}
	where the  last inequality holds under the  stepsize condition:
	\begin{align} \label{stepsize_strongcvx_proof_2}
		\alpha \leq \min\left\{\frac{\mu (1-\gamma)}{4 c_1^2 c_2^2 L^2 \lambda^4 }, \frac{1-\gamma}{3  L + \mu}\right\}.
	\end{align}
	Since $\rho(A) <1$, we can iterate inequality \eqref{linear_dynamical_error2} to get
	\begin{align} 
		\begin{bmatrix} 
			\Ex  \|\bar{e}_x^{k}\|^2 \\
			\frac{c_1^2}{n}	\Ex \|\hat{\x}^{k}\|^2  
		\end{bmatrix}
		& \leq  
		A^k
		\begin{bmatrix} 
			\Ex  \|\bar{e}_x^{0}\|^2 \\
			\frac{c_1^2}{n} \Ex \|\hat{\x}^{0}\|^2  
		\end{bmatrix}
		+ \sum_{\ell=0}^{k-1} A^\ell b \nonumber \\ 
		& \leq  
		A^k
		\begin{bmatrix} 
			\Ex  \|\bar{e}_x^0\|^2 \\
			\frac{c_1^2}{n} \Ex \|\hat{\x}^{0}\|^2  
		\end{bmatrix} 
		+(I- A)^{-1} b.
	\end{align}
	Taking the (induced) $1$-norm, using the sub-multiplicative  properties of matrix induced norms, it holds that
	\begin{align} \label{tran_SC_0}
		\Ex  \|\bar{e}_x^{k}\|^2  +  \tfrac{c_1^2}{n}	\Ex \|\hat{\x}^{k}\|^2  
		& \leq  
		\|A^k\|_1  \tilde{a}_0
		+ \left\|(I- A)^{-1} b \right\|_1
		\nonumber \\
		& \leq  
		\|A\|^k_1 \tilde{a}_0
		+ \left\|(I- A)^{-1} b \right\|_1.
	\end{align}
	where $\tilde{a}_0 =\Ex  \|\bar{x}^{0}-x^\star\|^2 + \frac{c_1^2}{n} \Ex \|\hat{\x}^{0}\|^2 $.  We now bound the last term by  noting that  
	\begin{align*}
		&(I-A)^{-1} b \\
		&=
		\tfrac{1}{\det(I-A)}
		\begin{bmatrix}
			\frac{1-\gamma}{2} \vspace{0.5mm} 		 
			&
			\frac{3}{2}   \alpha L      \vspace{0.5mm} \\
			\frac{2 \alpha^2 c_1^2 c_2^2 L^2  \lambda^4   }{ (1-\gamma)}
			& 
			\mu  \alpha
		\end{bmatrix} b \\
		& = 
		\frac{1}{\alpha  \mu (1-\gamma) (\frac{1}{2} - \frac{3 \alpha^2   c_1^2 c_2^2  L^3 \lambda^4 }{(1-\gamma)^2 \mu } ) }
		\begin{bmatrix}
			\frac{1-\gamma}{2} \vspace{0.5mm} 		 
			&
			\frac{3}{2}   \alpha L      \vspace{0.5mm} \\
			\frac{2 \alpha^2 c_1^2 c_2^2 L^2  \lambda^4   }{ (1-\gamma)}
			& 
			\mu  \alpha
		\end{bmatrix}
		\begin{bmatrix}
			\frac{\alpha^2  \sigma^2 }{n} \\
			\alpha^2 	c_1^2 c_2^2  \lambda^4   \sigma^2 
		\end{bmatrix}
		\nonumber \\
		& \leq  \frac{4}{ \alpha  \mu (1-\gamma)}  
		\begin{bmatrix}
			\frac{(1-\gamma) \alpha^2 \sigma^2}{2n} + \frac{3}{2} c_1^2 c_2^2 \alpha^3 L \lambda^4 \sigma^2  \vspace{2mm}
			\\
			\frac{2 \alpha^4 c_1^2 c_2^2  L^2 \lambda^4 \sigma^2}{n (1-\gamma)} + \alpha^3 c_1^2 c_2^2  \mu \lambda^4 \sigma^2
		\end{bmatrix},
	\end{align*}
	where $\det(\cdot)$ denotes the determinant operation. In the last step we used $\frac{1}{2} - \frac{3  c_1^2 c_2^2 \alpha^2  L^3 \lambda^4 }{(1-\gamma)^2 \mu } \geq \frac{1}{4}$ or $\alpha \leq \frac{\sqrt{\mu} (1-\gamma) }{2\sqrt{3}  c_1 c_2  L^{3/2} \lambda^2 }$. 
	Therefore, from \eqref{tran_SC_0} 
	\begin{align} \label{tran_SC}
		&		\Ex  \|\bar{e}_x^{k}\|^2  +  \tfrac{c_1^2}{n}	\Ex \|\hat{\x}^{k}\|^2  
		\nonumber \\
		& \leq  (1-\tfrac{\alpha  \mu}{2})^k \tilde{a}_0 + \left\|(I- A)^{-1} b \right\|_1
		\nonumber \\
		&\leq 
		(1-\tfrac{\alpha  \mu}{2})^k \tilde{a}_0 + \tfrac{2  \sigma^2}{\mu n} \alpha  
		\nonumber \\
		& \quad +
		\tfrac{6 c_1^2 c_2^2 (L/\mu) \lambda^4 \sigma^2 + 4 c_1^2 c_2^2 \lambda^4 \sigma^2 }{1-\gamma} \alpha^2 + \tfrac{8 c_1^2 c_2^2  L^2 \lambda^4 \sigma^2}{\mu n (1-\gamma)^2} \alpha^3.
	\end{align}
	Using $(1-\tfrac{\alpha  \mu}{2})^K \leq \exp(-\tfrac{\alpha  \mu}{2} K)$ and \eqref{hat_0_bound}, it holds that
	\begin{align}  \label{final_a_bound_strong_cvx}
		&		\Ex  \|\bar{e}_x^{K}\|^2  +  \tfrac{c_1^2}{n}	\Ex \|\hat{\x}^{K}\|^2  
		\nonumber  \\
		& \leq \exp(-\tfrac{\alpha  \mu}{2} K) (a_0+ \alpha^2 a^{\star}) +   a_1 \alpha + a_2 \alpha^2 + a_3 \alpha^3,
	\end{align}
	where
	\begin{subequations} \label{a0123_constants_strongcvx}
		\begin{align}
			a_0 & \define \Ex  \|\bar{x}^{0}-x^\star\|^2,\quad 	a^{\star} \define  \tfrac{c_1^2 c_2^2  \lambda^4}{(1-\lambda)^2 n} \|\hat{\U}\tran \grad \mathbf{f}(\mathbf{x}^\star) \|^2 \\
			a_1 & \define   \frac{2  \sigma^2}{\mu n} , \quad 
			a_2 \define   \frac{10 c_1^2 c_2^2 L \lambda^4 \sigma^2 }{\mu (1-\gamma)} \\ 
			a_3 & \define  \frac{8 c_1^2 c_2^2  L^2 \lambda^4 \sigma^2}{\mu n (1-\gamma)^2}.
		\end{align}
	\end{subequations}
	Note that by combining all stepsize conditions, it is sufficient to require
	\begin{align} \label{stepsize_strongcvx_proof_all}
		\alpha \leq \frac{1}{\underline{\alpha}} \define  \min\left\{\frac{1-\lambda}{8 L }, \frac{\mu (1-\lambda)}{8 c_1^2 c_2^2 L^2 \lambda^4 }, \frac{\sqrt{\mu} (1-\lambda) }{4\sqrt{3}  c_1 c_2  L^{3/2} \lambda^2 }\right\}.
	\end{align}
	We now select 
	\begin{align} \label{stepsize_strongcvx_rate}
		\alpha=\min \left\{\ln \left(\max \left\{2, \mu^{2} (a_0 + \tfrac{a^{\star}}{\underline{\alpha}^2}) \frac{K}{a_1} \right\}\right)/\mu  K, \tfrac{1}{\underline{\alpha}}\right\} \leq \frac{1}{\underline{\alpha}}.
	\end{align}
	Under this choice the exponential term in \eqref{final_a_bound_strong_cvx} can be upper bounded as follows.  i) If $\alpha = \frac{\ln \left(\max \left\{1, \mu^{2} (a_0 + a^{\star}/\underline{\alpha}^2) K / a_1\right\}\right)}{\mu  K} \leq \frac{1}{\underline{\alpha}} $ then     
	\begin{align*}
		&	\exp(-\tfrac{\alpha  \mu}{2} K) (a_0+ \alpha^2 a^{\star}) \\
		& \leq \tilde{\mathcal{O}}\left( (a_0 + \tfrac{a^{\star}}{\underline{\alpha}^2})  \exp \left[-\ln \left(\max \left\{1, \mu^{2} (a_0 + \tfrac{a^{\star}}{\underline{\alpha}^2}) K / a_1 \right\}\right)\right]\right)
		\nonumber \\
		&=\mathcal{O}\left(\frac{a_1}{\mu  K}\right);
	\end{align*}
	ii) Otherwise $\alpha = \frac{1}{\underline{\alpha}} \leq \frac{\ln \left(\max \left\{1, \mu^{2} (a_0+a^{\star}/\underline{\alpha}^2) K / a_1 \right\}\right)}{\mu  K}$ and
	\begin{align*}
		\exp(-\tfrac{\alpha  \mu}{2} K) (a_0+ \alpha^2 a^{\star}) 
		& = 	   \exp \left[-\tfrac{  \mu K}{2 \underline{\alpha}}\right] (a_0+\tfrac{a^{\star}}{\underline{\alpha}^2}).
	\end{align*}
	Therefore, under the stepsize condition \eqref{stepsize_strongcvx_rate}  it holds that
	\begin{align*} 
		&		\Ex  \|\bar{e}_x^{K}\|^2  +  \tfrac{c_1^2}{n}	\Ex \|\hat{\x}^{K}\|^2  
		\nonumber  \\
		& \leq \exp(-\tfrac{\alpha  \mu}{2} K) (a_0+ \alpha^2 a^{\star}) +   a_1 \alpha + a_2 \alpha^2 + a_3 \alpha^3	  
		\nonumber \\ 
		&\leq \tilde{\mathcal{O}}\left(\frac{a_1}{\mu  K}+\frac{a_2 }{\mu^{2} K^{2}}
		+  \frac{a_3 }{\mu^{3}  K^{3}} + (a_0+\tfrac{a^{\star}}{ \underline{\alpha}^2})  \exp \left[-\tfrac{ K}{\underline{\alpha}}\right] \right).
	\end{align*}
	Plugging the constants \eqref{a0123_constants_strongcvx} into the above inequality, using \eqref{stepsize_strongcvx_proof_all} and \eqref{avg_hat_bound}  yields our rate \eqref{scvx_theorem}.

	\footnotesize
	\bibliographystyle{ieeetr}
	\bibliography{myref_gt}

\begin{thebibliography}{10}

\bibitem{lopes2008diffusion}
C.~G. Lopes and A.~H. Sayed, ``Diffusion least-mean squares over adaptive
  networks: Formulation and performance analysis,'' {\em IEEE Transactions on
  Signal Processing}, vol.~56, no.~7, pp.~3122--3136, 2008.

\bibitem{ram2010distributed}
S.~S. Ram, A.~Nedic, and V.~V. Veeravalli, ``Distributed stochastic subgradient
  projection algorithms for convex optimization,'' {\em J. {O}ptim. {T}heory
  {A}ppl.}, vol.~147, no.~3, pp.~516--545, 2010.

\bibitem{cattivelli2010diffusion}
F.~S. Cattivelli and A.~H. Sayed, ``Diffusion {LMS} strategies for distributed
  estimation,'' {\em IEEE Trans. Signal Process}, vol.~58, no.~3, p.~1035,
  2010.

\bibitem{yuan2020influence}
K.~Yuan, S.~A. Alghunaim, B.~Ying, and A.~H. Sayed, ``On the influence of
  bias-correction on distributed stochastic optimization,'' {\em IEEE
  Transactions on Signal Processing}, vol.~68, pp.~4352--4367, 2020.

\bibitem{xu2015augmented}
J.~Xu, S.~Zhu, Y.~C. Soh, and L.~Xie, ``Augmented distributed gradient methods
  for multi-agent optimization under uncoordinated constant stepsizes,'' in
  {\em {\em Proc. 54th} IEEE Conference on Decision and Control (CDC)},
  ({O}saka, Japan), pp.~2055--2060, 2015.

\bibitem{di2016next}
P.~Di~Lorenzo and G.~Scutari, ``Next: In-network nonconvex optimization,'' {\em
  IEEE Transactions on Signal and Information Processing over Networks},
  vol.~2, no.~2, pp.~120--136, 2016.

\bibitem{nedic2017achieving}
A.~Nedic, A.~Olshevsky, and W.~Shi, ``Achieving geometric convergence for
  distributed optimization over time-varying graphs,'' {\em SIAM Journal on
  Optimization}, vol.~27, no.~4, pp.~2597--2633, 2017.

\bibitem{qu2017harnessing}
G.~Qu and N.~Li, ``Harnessing smoothness to accelerate distributed
  optimization,'' {\em IEEE Transactions on Control of Network Systems},
  vol.~5, pp.~1245--1260, Sept. 2018.

\bibitem{pu2021distributed}
S.~Pu and A.~Nedi{\'c}, ``Distributed stochastic gradient tracking methods,''
  {\em Mathematical Programming}, vol.~187, no.~1, pp.~409--457, 2021.

\bibitem{xin2021improved}
R.~Xin, U.~A. Khan, and S.~Kar, ``An improved convergence analysis for
  decentralized online stochastic non-convex optimization,'' {\em IEEE
  Transactions on Signal Processing}, vol.~69, pp.~1842--1858, 2021.

\bibitem{lu2019gnsd}
S.~Lu, X.~Zhang, H.~Sun, and M.~Hong, ``Gnsd: A gradient-tracking based
  nonconvex stochastic algorithm for decentralized optimization,'' in {\em 2019
  IEEE Data Science Workshop (DSW)}, pp.~315--321, IEEE, 2019.

\bibitem{yuan2021removing}
K.~Yuan and S.~A. Alghunaim, ``Removing data heterogeneity influence enhances
  network topology dependence of decentralized {SGD},'' {\em arXiv
  preprint:2105.08023}, 2021.

\bibitem{koloskova2021improved}
A.~Koloskova, T.~Lin, and S.~U. Stich, ``An improved analysis of gradient
  tracking for decentralized machine learning,'' {\em Advances in Neural
  Information Processing Systems}, vol.~34, pp.~11422--11435, 2021.

\bibitem{alghunaim2021unified}
S.~A. Alghunaim and K.~Yuan, ``A unified and refined convergence analysis for
  non-convex decentralized learning,'' {\em IEEE Transactions on Signal
  Processing}, vol.~70, pp.~3264--3279, June 2022.
\newblock (ArXiv preprint:2110.09993).

\bibitem{zhu2010discrete}
M.~Zhu and S.~Martinez, ``Discrete-time dynamic average consensus,'' {\em
  Automatica}, vol.~46, no.~2, pp.~322--329, 2010.

\bibitem{nedic2009distributed}
A.~Nedic and A.~Ozdaglar, ``Distributed subgradient methods for multi-agent
  optimization,'' {\em IEEE Transactions on Automatic Control}, vol.~54, no.~1,
  pp.~48--61, 2009.

\bibitem{yuan2016convergence}
K.~Yuan, Q.~Ling, and W.~Yin, ``On the convergence of decentralized gradient
  descent,'' {\em SIAM Journal on Optimization}, vol.~26, no.~3,
  pp.~1835--1854, 2016.

\bibitem{xi2018linear}
C.~Xi, V.~S. Mai, R.~Xin, E.~H. Abed, and U.~A. Khan, ``Linear convergence in
  optimization over directed graphs with row-stochastic matrices,'' {\em IEEE
  Transactions on Automatic Control}, vol.~63, no.~10, pp.~3558--3565, 2018.

\bibitem{pu2020push}
S.~Pu, W.~Shi, J.~Xu, and A.~Nedi{\'c}, ``Push--pull gradient methods for
  distributed optimization in networks,'' {\em IEEE Transactions on Automatic
  Control}, vol.~66, no.~1, pp.~1--16, 2020.

\bibitem{daneshmand2018second}
A.~Daneshmand, G.~Scutari, and V.~Kungurtsev, ``Second-order guarantees of
  distributed gradient algorithms,'' {\em SIAM Journal on Optimization},
  vol.~30, no.~4, pp.~3029--3068, 2020.

\bibitem{sun2019convergence}
Y.~Sun, G.~Scutari, and A.~Daneshmand, ``Distributed optimization based on
  gradient tracking revisited: Enhancing convergence rate via surrogation,''
  {\em SIAM Journal on Optimization}, vol.~32, no.~2, pp.~354--385, 2022.

\bibitem{scutari2019distributed}
G.~Scutari and Y.~Sun, ``Distributed nonconvex constrained optimization over
  time-varying digraphs,'' {\em Mathematical Programming}, vol.~176, no.~1-2,
  pp.~497--544, 2019.

\bibitem{saadatniaki2020decentralized}
F.~Saadatniaki, R.~Xin, and U.~A. Khan, ``Decentralized optimization over
  time-varying directed graphs with row and column-stochastic matrices,'' {\em
  IEEE Transactions on Automatic Control}, vol.~65, no.~11, pp.~4769--4780,
  2020.

\bibitem{tang2020distributed}
Y.~Tang, J.~Zhang, and N.~Li, ``Distributed zero-order algorithms for nonconvex
  multiagent optimization,'' {\em IEEE Transactions on Control of Network
  Systems}, vol.~8, no.~1, pp.~269--281, 2020.

\bibitem{xin2020fast}
R.~Xin, U.~A. Khan, and S.~Kar, ``A fast randomized incremental gradient method
  for decentralized non-convex optimization,'' {\em IEEE Transactions on
  Automatic Control}, vol.~to appear, 2021.

\bibitem{xin2021fast}
R.~Xin, U.~A. Khan, and S.~Kar, ``Fast decentralized non-convex finite-sum
  optimization with recursive variance reduction,'' {\em SIAM Journal on
  Optimization, to appear}, 2021.

\bibitem{li2020communication}
B.~Li, S.~Cen, Y.~Chen, and Y.~Chi, ``Communication-efficient distributed
  optimization in networks with gradient tracking and variance reduction,'' in
  {\em International Conference on Artificial Intelligence and Statistics},
  pp.~1662--1672, PMLR, 2020.

\bibitem{sun2020improving}
H.~Sun, S.~Lu, and M.~Hong, ``Improving the sample and communication complexity
  for decentralized non-convex optimization: Joint gradient estimation and
  tracking,'' in {\em International Conference on Machine Learning},
  pp.~9217--9228, PMLR, 2020.

\bibitem{alghunaim2019decentralized}
S.~A. Alghunaim, E.~K. Ryu, K.~Yuan, and A.~H. Sayed, ``Decentralized proximal
  gradient algorithms with linear convergence rates,'' {\em IEEE Transactions
  on Automatic Control}, vol.~66, pp.~2787--2794, June 2021.

\bibitem{shi2015extra}
W.~Shi, Q.~Ling, G.~Wu, and W.~Yin, ``{EXTRA}: An exact first-order algorithm
  for decentralized consensus optimization,'' {\em SIAM Journal on
  Optimization}, vol.~25, no.~2, pp.~944--966, 2015.

\bibitem{koloskova2020unified}
A.~Koloskova, N.~Loizou, S.~Boreiri, M.~Jaggi, and S.~Stich, ``A unified theory
  of decentralized {SGD} with changing topology and local updates,'' in {\em
  International Conference on Machine Learning}, pp.~5381--5393, 2020.

\bibitem{nesterov2013introductory}
Y.~Nesterov, {\em Introductory Lectures on Convex Optimization: A Basic
  Course}, vol.~87.
\newblock Springer, 2013.

\end{thebibliography}

\end{document}